\documentclass[12pt,reqno]{amsart}

\usepackage[latin1]{inputenc}
\usepackage{amsmath,amsfonts,amssymb,graphics,enumerate,times}
\usepackage{amsthm,amscd}
\usepackage{latexsym,verbatim,graphicx,amsfonts}
\usepackage{hyperref}
\usepackage{multicol}
\usepackage[english]{babel}
\usepackage[T1]{fontenc}
\usepackage[margin=2cm]{geometry}

\bibliographystyle{siam}

\theoremstyle{plain}
\theoremstyle{plain}
\newtheorem{teo}{Theorem}[section]
\newtheorem{coro}[teo]{Corollary}
\newtheorem{lem}[teo]{Lemma}
\newtheorem{pro}[teo]{Proposition}

\newtheorem{defn}[teo]{Definition}

\theoremstyle{definition}
\newtheorem{oss}[teo]{Remark}

\newcommand{\D}{\mathbb{D}}
\newcommand{\B}{\mathbb{B}}
\newcommand{\T}{\mathbb{T}}
\newcommand{\HH}{\mathbb{H}}
\newcommand{\N}{\mathbb{N}}
\newcommand{\C}{\mathbb{C}}
\newcommand{\rr}{\mathbb{R}}
\newcommand{\s}{\mathbb{S}}
\newcommand{\Z}{\mathbb{Z}}

\DeclareMathOperator{\Span}{span}

\newcommand{\p}{\partial}

\DeclareMathOperator{\RRe}{Re}

\DeclareMathOperator{\ext}{ext}

\newcommand{\RR}{\mathbb{R}}
\newcommand{\BB}{\mathbb{B}}
\newcommand{\CC}{\mathbb{C}}

\renewcommand{\SS}{\mathbb{S}}

\begin{document}
 
\title[Quaternionic inner and outer]{Quaternionic inner and outer functions}
\author[Monguzzi]{Alessandro Monguzzi}
\address{Dipartimento di Matematica, Universit\`a degli Studi di
  Milano, Via C. Saldini 50, 20133 Milano, Italy.}
 \email{alessandro.monguzzi@unimi.it}
\author[Sarfatti]{Giulia Sarfatti}
\address{Dipartimento di Matematica e Informatica``U. Dini'', Universit\`a di Firenze, viale {Morgagni} 67/A, 50134 Firenze, Italy.} \email{giulia.sarfatti@unifi.it}
\author[Seco]{Daniel Seco}
\address{Universidad Carlos III de Madrid and Instituto de Ciencias 
Matem\'aticas (CSIC-UAM-UC3M-UCM), Departamento de Matem\'aticas UC3M, Avenida de 
la Universidad 30,  28911 Legan\'es (Madrid),
Spain.} \email{dsf$\underline{\,\,\,}$cm@yahoo.es}
\date{}

  %\subtitle{...}

\begin{abstract}
{We study properties of inner and outer functions in the Hardy space of the quaternionic unit ball. In particular, we give sufficient conditions as well as necessary ones for functions to be inner or outer.  }
\end{abstract}

  \keywords{Primary 30G35; Secondary 30H10, 30J05.}
%  \classification[PACS]{}
 % \communicated{...}
 % \dedication{...}

\maketitle

\section{Introduction}\label{Intro}

An essential role in the function theory of the unit disk of the complex plane is played by the property that any function in the Hardy space factorizes (uniquely) as the product of an inner and an outer function. The connections of inner and outer functions are ubiquitous in mathematical analysis, ranging from operator theory to dynamical systems and PDEs (see \cite{CGP} and \cite{Gar}, for instance). One of the main reasons for this is the fact that the closed invariant subspaces (for the shift operator in the Hardy space) can be described via an identification with inner functions, whereas outer functions contain information about approximation properties, and, in fact, coincide with cyclic functions. In the recent paper \cite{ale-giulia}, the first two authors proved an inner-outer factorization theorem for the Hardy space of slice regular functions on the quaternionic unit ball $H^2(\mathbb B)$. Thus, it seems natural to investigate the properties of inner and outer functions in the quaternionic setting more deeply, and the present paper is a first step in this direction. We will see that some properties of holomorphic inner and outer functions are straightforwardly generalized to the quaternionic setting, whereas some other properties are more peculiar of slice regular functions.

The paper is organized as follows. In Section \ref{sec-notation} we fix the notation and we recall some basic definitions and properties of slice regular functions and the quaternionic Hardy space $H^2(\mathbb B)$. We devote Section \ref{sec-inner} to properties of inner functions, whereas in Section \ref{sec-outer} we focus on outer ones. Then, in Section \ref{sec-approx} we investigate cyclicity and properties of optimal approximant polynomials in the quaternionic setting. We conclude formulating some open problems in Section \ref{sec-problems}.

\section{Notation and basic definitions}\label{sec-notation}

In this section we recall a few definitions and properties of slice regular functions and the quaternionic Hardy space $H^2(\mathbb B)$. We do not include any proofs; we refer the reader to the monograph \cite{libroGSS} for the basics on slice regular functions and to \cite{deFGS} for results concerning $H^2(\mathbb B)$.

Let $\HH$ denote the skew field of quaternions, let $\BB=\{q\in\HH:\ |q|<1\}$ be the quaternionic unit ball and
let $\partial \BB$ be its boundary, containing elements of the form $q=e^{tI}=\cos t+\sin t I,\ I\in\SS,\ t\in\RR$, where $\SS=\{q\in \HH : q^2=-1\}$ is the two dimensional sphere of imaginary units in $\HH$. Then,
\[ \HH=\bigcup_{I\in \s}(\rr+\rr I),  \hskip 1 cm \rr=\bigcap_{I\in \s}(\rr+\rr I),\]
where the \emph{slice} $L_I:=\rr+\rr I$ can be identified with the complex plane $\C$ for any $I\in\s$.  
\medskip

A function $f:\BB\to\HH$ is a \emph{slice regular function} if the restriction $f_I$ of $f$ to $\BB_I:=\BB\cap L_I$
is {holomorphic}, i.e., it has continuous partial derivatives and it is such that
\[\overline{\p}_If_I(x+yI)=\frac{1}{2}\left(\frac{\p}{\p x}+I\frac{\p}{\p y}\right)f_I(x+yI)=0\]
for all $x+yI\in \BB_I$.
The relationship between slice regular functions and holomorphic functions of one complex variable is made clear in the following lemma.
\begin{lem}[Splitting Lemma]\label{splitting-lemma}
	If $f$ is a slice regular function on $\BB$, then, for every $I\in\SS$ and for every $J\in\SS$, $J$ orthogonal to $I$, there exist two holomorphic functions $F, G:\BB_I\to L_I$ such that for every $z=x+yI\in \BB_I$,
	$$
	f_I(z)= F(z)+G(z)J.
	$$
\end{lem}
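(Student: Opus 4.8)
The plan is to exploit the fact that, once $I$ and an orthogonal $J\in\SS$ are fixed, the set $\{1,I,J,IJ\}$ is an orthonormal $\rr$-basis of $\HH$, so that every quaternion splits uniquely along $L_I$ and $L_IJ$. First I would record the direct sum decomposition $\HH=L_I\oplus L_IJ$: indeed any $q\in\HH$ writes as $q=a+bI+cJ+dIJ=(a+bI)+(c+dI)J$ with $a,b,c,d\in\rr$, the two summands lying respectively in $L_I$ and $L_IJ$, subspaces that intersect only in $\{0\}$. Applying this pointwise to $f_I(z)\in\HH$ for $z\in\BB_I$ then defines two functions $F,G\colon\BB_I\to L_I$ by $F(z)=a(z)+b(z)I$ and $G(z)=c(z)+d(z)I$, so that $f_I=F+GJ$ by construction; the continuity of the partial derivatives of $F$ and $G$ is inherited from that of $f_I$.

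The core of the argument is to show that $F$ and $G$ are holomorphic from $\BB_I$, identified with a domain in $\C$ via $L_I\cong\C$ (with $I$ playing the role of the imaginary unit), into $L_I$. To this end I would apply the Cauchy--Riemann type operator to the decomposition. Since $F$ and $G$ take values in the commutative subfield $L_I$ and $I\in L_I$, left multiplication by $I$ preserves $L_I$ and commutes with the values of $F$ and $G$; together with associativity this gives $I\,\p_y(GJ)=(I\,\p_yG)J$ with $I\,\p_yG\in L_I$. Hence
\[
2\,\overline{\p}_If_I=\left(\frac{\p F}{\p x}+I\frac{\p F}{\p y}\right)+\left(\frac{\p G}{\p x}+I\frac{\p G}{\p y}\right)J,
\]
where the first parenthesized term belongs to $L_I$ and the second summand to $L_IJ$.

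Now I would invoke the hypothesis $\overline{\p}_If_I=0$ together with the uniqueness of the $L_I\oplus L_IJ$ decomposition: each component must vanish separately, yielding
\[
\frac{\p F}{\p x}+I\frac{\p F}{\p y}=0,\qquad \frac{\p G}{\p x}+I\frac{\p G}{\p y}=0.
\]
These are exactly the Cauchy--Riemann equations for $F$ and $G$ as $L_I$-valued functions of $z=x+yI$, so both are holomorphic on $\BB_I$, which completes the proof. The step I expect to require the most care is the justification that the equation $A+BJ=0$ with $A,B\in L_I$ forces $A=B=0$: this rests precisely on the orthogonality of $J$ to $I$ (with $J\in\SS$), which guarantees $L_I\cap L_IJ=\{0\}$, and on the associativity used to keep the coefficient of $J$ inside $L_I$ after differentiating and multiplying by $I$.
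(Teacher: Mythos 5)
Your proof is correct; the paper itself states the Splitting Lemma without proof, deferring to the monograph \cite{libroGSS}, and your argument is precisely the standard one given there: decompose $\HH=L_I\oplus L_IJ$ via the basis $\{1,I,J,IJ\}$, push the operator $\overline{\p}_I$ through the decomposition, and use directness of the sum to split the equation $\overline{\p}_If_I=0$ into the two Cauchy--Riemann systems for $F$ and $G$. Nothing is missing.
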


It is a well-known fact that every slice regular function on the unit ball $\mathbb B$ admits a power series expansion of the form
$$
f(q)=\sum_{n\in\N} q^n a_n,
$$
where $\{a_n\}_{n\in\mathbb N}\subseteq\HH$. The \emph{conjugate} of $f$, which we denote by $f^c$, is the function defined by
\begin{equation}\label{regular-conjugate}
f^c(q):=\sum_{n\in\N}q^n\overline{a_n}.
\end{equation}
Morevover, we denote by $\widetilde{f}$ the function 
\begin{equation*}\label{function-tilde}
\widetilde f(q):=f(\overline q).
\end{equation*}
The function $\widetilde{f}$ is not slice regular but it is a {\em slice} function.
The class of slice functions was introduced in \cite{ghiloniperotti} in a more general setting than the present one.  In this paper we say that a function $f:\B\to \HH$ is a {\em slice function} if for any $I,J\in\SS$, 
%$I\neq K$, 
%and any $x+yI\in \p\B$, 

%\begin{equation}\label{repfor}
%f(x+yI)=\big((J-K)^{-1}J+I(J-K)^{-1}\big)f(x+yJ)-\big((J-K)^{-1}K+I(J-K)^{-1}\big)f(x+yK).
%\end{equation}
%For the particular choice of $K=-J$ the above formula reduces to
\begin{equation}\label{repfor2}
\begin{aligned}
f(x+yI)&=\frac{1-IJ}{2}f(x+yJ)+\frac{1+IJ}{2}f(x-yJ).
\end{aligned}
\end{equation}
Slice regular functions are examples of slice functions. Moreover, Formula \eqref{repfor2} furnishes a tool to uniquely extend a holomorphic function defined on the complex disk $\B_J$ to a slice regular function defined on the whole unit ball $\B$ (see \cite{libroGSS}). Given $f_J:\B_J\to \HH$, holomorphic function with respect to the complex variable $x+yJ$, the function $\ext(f_J):\B\to \HH$ defined for any $x+yI\in \B$ as 
\[\ext(f_J)(x+yI)=\frac{1-IJ}{2}f_J(x+yJ)+\frac{1+IJ}{2}f_J(x-yJ)\]
is slice regular on $\B$.

\noindent Formula \eqref{repfor2} can also be used to prove the following result concerning the zeros of a slice regular function.
\begin{pro}\label{zerislice}
	Let $f$ be a slice regular function on $\B$ such that $f(\B_I)\subseteq L_I$ for some $I\in\s$. 
	If $f(x+yJ)=0$ for some $J\in\s\setminus \{\pm I\}$, then $f(x+yK)=0$ for any $K\in\s$.
\end{pro}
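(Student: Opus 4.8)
The plan is to exploit the Representation Formula \eqref{repfor2} — which every slice regular function satisfies, being a slice function — in order to reduce the statement about the whole sphere $x+y\s=\{x+yK:K\in\s\}$ to a statement about the single antipodal point $x-yJ$. Setting $w:=f(x-yJ)$ and applying \eqref{repfor2} with the given $I$ and $J$, and then again with $-I$ in place of $I$, the hypothesis $f(x+yJ)=0$ collapses the first summand and leaves
\[
f(x+yI)=\frac{1+IJ}{2}\,w, \qquad f(x-yI)=\frac{1-IJ}{2}\,w .
\]
Both left-hand sides lie in $L_I$ by the hypothesis $f(\B_I)\subseteq L_I$, since $x\pm yI$ belong to $\B_I$ (because $|x\pm yI|=|x+yJ|<1$). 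This membership is the key structural input.

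Next I would extract two independent pieces of information from these identities. Summing them gives $w=f(x+yI)+f(x-yI)\in L_I$. Subtracting them gives $IJ\,w=f(x+yI)-f(x-yI)\in L_I$, and since $L_I$ is closed under multiplication by $I$, this also yields $Jw\in L_I$. At this point the hypothesis $J\neq\pm I$ enters decisively: for $J\in\s$ one has $\s\cap L_I=\{\pm I\}$, so $J\notin L_I$. If $w$ were nonzero it would be invertible with $w^{-1}\in L_I$, and then $J=(Jw)\,w^{-1}$ would be a product of two elements of the subfield $L_I$, hence lie in $L_I$ — a contradiction. Therefore $w=f(x-yJ)=0$.

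Finally, with both $f(x+yJ)=0$ and $f(x-yJ)=0$ in hand, a last application of \eqref{repfor2} with an arbitrary $K\in\s$ in place of $I$ makes both summands vanish, giving $f(x+yK)=0$ for every $K\in\s$, which is the claim.

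I do not expect a genuine computational obstacle; the only non-routine point is the idea of evaluating \eqref{repfor2} at both $\pm I$ and combining the two identities, so that the \emph{sum} forces $w\in L_I$ while the \emph{difference} simultaneously forces $Jw\in L_I$. It is the incompatibility of these two membership conditions with $J\notin L_I$ that drives the proof, and the thing to get right is the bookkeeping of which slice each quantity lives on, together with the observation that $J\neq\pm I$ is precisely equivalent to $J\notin L_I$ — an essential hypothesis, since it supplies exactly the leverage producing the contradiction.
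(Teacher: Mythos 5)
Your argument is correct and follows exactly the route the paper indicates (it gives no written proof, only the remark that Formula \eqref{repfor2} yields the result): evaluating the representation formula at $\pm I$, using $f(\B_I)\subseteq L_I$ to force both $w=f(x-yJ)\in L_I$ and $Jw\in L_I$, and then the fact that $\s\cap L_I=\{\pm I\}$ to conclude $w=0$, after which one more application of \eqref{repfor2} kills the whole sphere. All the steps check out, including the left-multiplication bookkeeping and the invertibility of $w$ in the subfield $L_I$.
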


The structure of the zero set of a slice regular function is completely understood.
\begin{teo}%\label{zeri}
	Let $f$ be a slice regular function on  $\B$. If $f$ does not vanish identically, 
	then its zero set consists of the union of isolated points and isolated $2$-spheres of the form $x +y \mathbb{S}$ with 
	$x,y \in \mathbb{R}$, $y \neq 0$.
\end{teo}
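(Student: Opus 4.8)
The plan is to combine the Splitting Lemma, which controls zeros one slice at a time, with the representation formula \eqref{repfor2}, which organizes the zeros into spheres, and finally to promote this to genuine isolation across different slices by passing to the symmetrization of $f$. First I would fix $I\in\SS$ together with an orthogonal $J$, and write $f_I=F+GJ$ as in the Splitting Lemma. Since $f\not\equiv0$ and $f$ is recovered on all of $\B$ from its restriction to any slice via the extension operator $\ext$, the restriction $f_I$ is not identically zero, so at least one of the holomorphic functions $F,G\colon\B_I\to L_I$ does not vanish identically. Because $1$ and $J$ are linearly independent over $L_I$, a point $z\in\B_I$ is a zero of $f$ exactly when $F(z)=G(z)=0$; as the zero set of a nonzero one-variable holomorphic function is discrete, the zeros of $f$ lying on the slice $L_I$ form a discrete subset of $\B_I$.

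Next I would use \eqref{repfor2} to describe $f$ on a single sphere $x_0+y_0\SS$ with $y_0\neq0$. Writing $c=f(x_0+y_0J)$ and $d=f(x_0-y_0J)$, the representation formula gives
$$f(x_0+y_0 I)=\alpha+I\beta,\qquad \alpha=\tfrac12(c+d),\quad \beta=\tfrac12 J(d-c),$$
so the restriction of $f$ to the sphere is an affine function of $I$. If $\beta=0$ then $f$ is the constant $\alpha$ on the whole sphere, which is thus either entirely contained in the zero set (when $\alpha=0$) or disjoint from it. If $\beta\neq0$, then $f(x_0+y_0I)=0$ forces $I=-\alpha\beta^{-1}$, a single quaternion, so the sphere carries at most one zero of $f$. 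Hence every zero of $f$ sits on a sphere that contributes to its zero set either a single point (a real point when $y_0=0$) or an entire $2$-sphere $x_0+y_0\SS$.

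The remaining, and genuinely delicate, point is isolation in $\HH$: the arguments above are slice-by-slice and sphere-by-sphere, and do not by themselves rule out a sequence of zeros lying on distinct, accumulating spheres. To settle this I would introduce the symmetrization $f^s=f*f^c$ (see \cite{libroGSS}), a slice regular function whose power series has real coefficients; consequently $f^s(\B_I)\subseteq L_I$ for every $I\in\SS$, and its zero set is precisely the union of those spheres $x+y\SS$ that meet the zero set of $f$. Since $f\not\equiv0$ we have $f^s\not\equiv0$, and because $f^s$ preserves slices its restriction $f^s_I$ is a nonzero holomorphic function of one complex variable, so $Z(f^s)\cap L_I$ is discrete. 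By Proposition \ref{zerislice}, $Z(f^s)$ is a union of spheres, each meeting $L_I$ in at most two points, and the discreteness of $Z(f^s)\cap L_I$ therefore forces these spheres to be mutually isolated in $\HH$.

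Finally I would transfer this back to $f$. Every zero of $f$ lies on one of the isolated spheres constituting $Z(f^s)$, and by the sphere-wise analysis above each such sphere contributes to the zero set of $f$ either a single point or the whole sphere; isolation of the spheres of $Z(f^s)$ then yields isolation of the corresponding points and $2$-spheres of $Z(f)$, which is the assertion. I expect the construction and the basic properties of the symmetrization to be the crux: the slice-wise discreteness and the representation formula are routine once available, but it is the passage to $f^s$ that converts ``at most one zero per sphere'' into genuine isolation across all slices simultaneously.
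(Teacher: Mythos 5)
The paper states this theorem without proof --- it is background recalled from \cite{libroGSS} --- so there is no in-paper argument to compare against; your proposal is a correct reconstruction of the standard proof from that monograph (slice-wise discreteness via the Splitting Lemma, the affine-in-$I$ behaviour of $f$ on each sphere coming from the representation formula \eqref{repfor2}, and isolation across slices via the slice-preserving symmetrization $f^s$, whose zero set contains $Z(f)$ and is a union of spheres by Proposition \ref{zerislice}). The only step left implicit is that discreteness of $Z(f^s)\cap\B_I$ forces the spheres themselves not to accumulate in $\HH$; this follows because the distance between two spheres $x_1+y_1\SS$ and $x_2+y_2\SS$ is realized by their trace points on any fixed half-slice, so it is a routine omission and the argument stands.
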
 

%\noindent Spheres of zeros of real dimension $2$ are a peculiarity  of regular functions.
%
%\begin{defn}\label{generatori}
%Let $f$ be a regular function on  $\B$. 
A $2$-dimensional sphere $x+y\s \subseteq \B$ of zeros of $f$ is called a 
{\em spherical zero} of $f$. 
Any point $x+yI$ of such a sphere is called a {\em generator} of the spherical zero $x+y\s$.
Any zero of $f$ that is not a generator of a spherical zero is called an
{\em isolated zero} %(or a {\em non spherical zero} or simply a {\em zero}) 
of $f$.
Moreover, on each sphere $x+y\s$ contained in $\B$, the zeros of $f$ are in one-to-one correspondence with the zeros of $f^c$, see \cite[Proposition 3.9]{libroGSS}.% if $x+y\s$ is a spherical zero of $f$, then $x+y\s$ is a spherical zro of $f^c$ and if $x+yI$ is an isolated ero 
%\end{defn}
%\begin{oss}
%A real zero $x$ can be interpreted as spherical zero, where the sphere $x+y\s$ coincides with $x$ ($y=0$).
%\end{oss}
%\noindent Zeros can be extracted from a regular function in the following manner.
%
%\begin{pro}
%Let $f$ be a regular function on a symmetric slice domain $\Omega$. A point $p=x+yI\in\Omega$ is a zero of $f$ 
%if and only if there exists a regular function $g: \Omega \to \HH$ such that
%\[f(q)=(q-p)*g(q).\]
%Furthermore, $f$ vanishes identically on the sphere  $x+y\s$ generated by $p$ if and
%only if there exists a regular function $h:\Omega\to\HH$ such that
%\[f(q)=\left((q-x)^2+y^2\right)h(q).\]
%\end{pro}
%\noindent An appropriate notion of multiplicity, introduced in \cite{milan} for the case of regular quaternionic polynomials, can be given for zeros of 
%regular functions.
%
%\begin{defn}\label{multiplicity}
%	Let $f$ be a regular function on  $\B$ and let $x+y\s\subset \B$ with $y\neq0$. 
%	Let $m,n\in\N$ and $p_1,\dots,p_n\in x+y\s$ (with $p_i\neq \overline{p}_{i+1}$ for all $i\in\{1,\dots,n-1\}$) be such that 
%	\[f(q)=\left((q-x)^2+y^2\right)^m(q-p_1)*(q-p_2)*\dots*(q-p_n)*g(q)\]
%	for some regular function $g:\B\to \HH$ which does not have zeros in $x+y\s$. Then
%	$2m$ is called the {\em spherical multiplicity} of $x+y\s$ and $n$ is called the {\em isolated multiplicity} of $p_1$. 
%	On the other hand, if $x\in\rr$, then we call {\em isolated
%		multiplicity} of $f$ at $x$ the number $k\in\N$ such that
%	\[f(q)=(q-x)^kh(q)\]
%	for some regular function $h:\B \to \HH$ which does not vanish at $x$.
%\end{defn}

In general, the pointwise product of two slice regular functions is not a slice regular function and a suitable product must be considered, namely, the so-called \emph{slice} or $\ast$-\emph{product}. If $f(q)=\sum_{n\in\N} q^n a_n$ and $g(q)=\sum_{n\in\N} q^n b_n$ are two slice regular functions on $\B$, then
\begin{equation}\label{defn-product}
f\ast g(q):=\sum_{n\in\N}q^n\sum_{k\in\N}a_k b_{n-k}.
\end{equation}
This product is related to the pointwise product by the formula
\begin{equation}\label{star-product-pointwise}
f* g(q)=\left\{\begin{array}{l r}
0 & \text{ if $f(q)=0 $}\\
f(q)g(T_{f^c}(q)) & \text{if $f(q)\neq 0 $\,,}
\end{array}\right.
\end{equation}
where $T_{f^c}(q):= f(q)^{-1}q f(q)$. 
%The $\ast$-product may also be expressed by using the Splitting Lemma. Let $I,J\in\SS$ with $I$ orthogonal to $J$ and $F,G,H,K$ be holomorphic functions from $\BB_I$ to $L_I$ such that $f_I=F+GJ$ and $g_I=H+KJ$. Then, for each $z\in \BB_I$ we have $f*g=\ext(f_I*g_I)$, where 
%\begin{equation}\label{star-product-splitting}
% f_I\ast g_I(z)=\left[F(z)H(z)-G(z)\overline{K(\overline{z})}\right]+\left[F(z)K(z)+G(z)\overline{H(\overline{z})}\right]J.
%\end{equation}
%

By means of the $\ast$-product, we can associate to a function $f$ its \emph{symmetrization} $f^s$, that is,
\begin{equation}\label{fs}
f^s(q):= f^c\ast f(q)=f\ast f^c(q).
\end{equation}
We remark here that the symmetrization $f^s$ is a \emph{slice preserving} function, namely $f^s(\mathbb B_I)\subseteq L_I$ for all $I\in\mathbb S$. In particular, this is equivalent to the fact that the coefficients in the power series expansion of $f^s$ are all real numbers.

Finally, we denote by $f^{-\ast}$ the inverse of $f$ with respect to the $\ast$-product, which is given by
\begin{equation*}
f^{-*}(q)=(f^s(q))^{-1}f^c(q).
\end{equation*}

\smallskip

The function $f^{-*}$ is defined on $\{q\in \B \ | \ f^s(q) \neq 0\}$ and $f* f^{-*}=f^{-*}* f =1$. All of $f^{c}, f^s$ and $f^{-\ast}$ are slice regular functions if $f$ is slice regular.

The basic theory of Hardy spaces $H^p(\mathbb B)$ was established in \cite{deFGS}. Here we only recall some facts for the Hilbert case $p=2$ and for the extremal case  $p=\infty$ since it is enough for our purposes. Set $\ell^2 := \ell^2(\N,\HH)$. The Hardy space $H^2(\B)$ is the function space defined as
$$
H^2(\B):=\bigg\{f\textrm{ slice regular on $\mathbb B$}: \,f(q)= \sum_{n\in \N}q^n a_n \,,\{a_n\}_{n\in \N} \in \ell^2 \bigg\}.
$$
Each function $f\in H^2(\mathbb B)$ admits a boundary value function, defined in a canonical sense almost everywhere (with respect to a measure $\Sigma$ that will be described later on). We still denote this function by $f$. With this in mind, if $f\in H^2(\B)$, whenever we write $f(q)$ with $|q|=1$, we are implicitly evaluating the boundary value function associated to $f$.

The space $H^2(\BB)$ is a right quaternionic Hilbert space with respect to the inner product \begin{equation}\label{innerproduct}
\Big<\sum_{n\in\N} q^n a_n,\sum_{n\in\N}q^n b_n\Big>:= \sum_{n\in\N}\overline{b_n}a_n.
\end{equation}

This inner product on $H^2(\B)$ admits also an integral representation. Let us endow $\partial\BB$ with the measure
\begin{equation}\label{Sigma}
d\Sigma\left(e^{tI}\right)=d\sigma(I)dt,
\end{equation}
where $dt$ is the Lebesgue measure on $[0,2\pi)$ and $d\sigma$ is the standard area element of $\s$, normalized in such a way that $\sigma(\s)=\Sigma(\p\B)=1$. Then,
\begin{equation}\label{integral inner product}
\left<f,g\right>= \int_{\p\B} \overline{g(q)}f(q)\, d\Sigma(q).
\end{equation}
The measure $\Sigma$, and not the induced Lebesgue measure on $\p\B$, is naturally associated to the Hardy space $H^2(\mathbb B)$, as reasoned in \cite{deFGS, metrica}. An important feature of this inner product is that it can be actually computed by restricting it to any slice $L_I$. In more detail, given any $I\in\SS$ we set
\begin{equation}\label{inner-product-slice2}
\left<f,g\right>_I=\frac{1}{2\pi}\int_{0}^{2\pi}\overline{g(e^{\theta I})}f(e^{\theta I})d\theta,
\end{equation}
where $d\theta$ is the Lebesgue measure on $[0,2\pi)$. For any $I\in\SS$, it holds that
\begin{equation}\label{inner-product-slice}
\left<f,g\right>=\int_{\p\B} \overline{g(q)}f(q)\, d\Sigma(q)=\left<f,g\right>_I.
\end{equation}
We denote by $H^\infty(\B)$ the space of bounded slice regular functions on the unit ball. Notice that $H^\infty(\B) \subseteq H^2(\B)$.

Definitions of inner and outer functions in the quaternionic setting are similar to the classical ones for holomorphic functions and first appeared in \cite{deFGS}. 
\begin{defn}\label{defn-inner}
	A function $\varphi\in H^\infty(\B)$ is {\em inner} if $|\varphi(q)|\leq 1$ on $\B$ and $|\varphi(q)|=1$ $\Sigma$-almost everywhere on $\p\B$.
\end{defn}
\begin{defn}\label{defn-outer}
	A function $g\in H^2(\B)$ is {\em outer} if given any $f\in H^2(\B)$ such that $|g(q)|=|f(q)|$ for $\Sigma$-almost every $q\in\p\B$, then $|g(q)|\geq |f(q)|$ for any $q\in\B$.
\end{defn}

We point out that the definition of inner and outer functions were given in terms of the induced Lebesgue measure  $m$ on $\p\B$. It is not difficult to show that $\Sigma$ and $m$ are mutually absolutely continuous.

The following theorem was proved in \cite{ale-giulia} by the first two authors.

\begin{teo}[Inner-outer factorization]\label{thm-factorization}
	Let $f\in H^2(\B)$, $f\not\equiv 0$. Then, $f$ has a factorization $f=\varphi*g$ where $\varphi$ is inner and $g$ is outer. Moreover, this factorization is unique up to a unitary constant in the following sense: if $f=\varphi\ast g=\varphi_1\ast g_1$, then $\varphi_1=\varphi\ast\lambda$ and $g_1=\overline{\lambda}\ast g$ for some $\lambda\in\HH$ such that $|\lambda|=1$.
\end{teo}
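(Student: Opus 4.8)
The plan is to follow the classical blueprint, organizing the proof around the shift operator $S\colon H^2(\B)\to H^2(\B)$ defined by $S(h):=q\ast h$. Since $q\ast\big(\sum_{n}q^{n}a_{n}\big)=\sum_{n}q^{n+1}a_{n}$ merely shifts the coefficients, $S$ is an isometry of the right quaternionic Hilbert space $H^2(\B)$, and its closed invariant right submodules are the analogue of the shift--invariant subspaces in Beurling's theorem. Given $f\not\equiv 0$, I would consider the closed $S$--invariant submodule $[f]:=\overline{\spa}\,\{q^{n}\ast f:n\ge 0\}$ generated by $f$ (the right $\HH$--span being understood), and aim to produce an inner $\varphi$ with $[f]=\varphi\ast H^2(\B)$; the factorization and the outerness of the second factor then follow formally.

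The core of the argument is a quaternionic Beurling theorem. I would study the wandering submodule $W:=[f]\ominus(q\ast[f])$ and show that it is generated, as a right submodule, by a single element $\varphi$ of unit norm, so that $\{q^{n}\ast\varphi\}_{n\ge 0}$ is orthonormal and $[f]=\overline{\spa}\,\{q^{n}\ast\varphi:n\ge 0\}$. All the required orthogonality relations can be reduced to ordinary holomorphic computations by evaluating the inner product on a single slice through \eqref{inner-product-slice} and splitting $f_{I}=F+GJ$ as in the Splitting Lemma~\ref{splitting-lemma}. The subtle point is to prove that the unit generator $\varphi$ is \emph{inner} in the sense of Definition~\ref{defn-inner}, namely $|\varphi|\le 1$ on $\B$ and $|\varphi|=1$ $\Sigma$--almost everywhere on $\p\B$.

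This last step is exactly where the quaternionic theory departs from the complex one, and I expect it to be the main obstacle. The difficulty is that the modulus is not multiplicative for the $\ast$--product: by \eqref{star-product-pointwise} one has $|\varphi\ast h(q)|=|\varphi(q)|\,|h(T_{\varphi^{c}}(q))|$, with the second argument displaced to another point of the same sphere $x+y\SS$. I would bypass this by passing to the symmetrization \eqref{fs}: the function $\varphi^{s}=\varphi^{c}\ast\varphi$ is slice preserving, so $|\varphi^{s}|$ is constant on each sphere and does behave multiplicatively. The isometry property of $h\mapsto\varphi\ast h$ (which holds because $q\mapsto T_{\varphi^{c}}(q)$ rotates within spheres and $\Sigma$ is rotation invariant there) then transfers to a genuine modular identity for $\varphi^{s}$ on a fixed slice $L_{I}$; applying the classical characterization of inner functions to the holomorphic, real--coefficiented $\varphi^{s}_{I}$, and returning to $\B$ via $\ext$, forces $|\varphi^{s}|\equiv 1$ on $\p\B$, whence $\varphi$ is inner.

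Granting the Beurling theorem the rest is formal. From $f\in[f]=\varphi\ast H^2(\B)$ we obtain $f=\varphi\ast g$ with $g\in H^2(\B)$; moreover $g$ is outer, since applying the isometry $h\mapsto\varphi\ast h$ shows $\varphi\ast[g]=[f]=\varphi\ast H^2(\B)$, whence $[g]=H^2(\B)$, i.e.\ $g$ is cyclic, which is equivalent --- as in the classical case --- to $g$ being outer in the sense of Definition~\ref{defn-outer}. For uniqueness, if $f=\varphi\ast g=\varphi_{1}\ast g_{1}$ are two such factorizations then $\varphi\ast H^2(\B)=[f]=\varphi_{1}\ast H^2(\B)$, so the uniqueness clause of the Beurling theorem gives $\varphi_{1}=\varphi\ast\lambda$ with $\lambda\in\HH$, $|\lambda|=1$. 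Substituting into $\varphi\ast g=\varphi\ast\lambda\ast g_{1}$ and cancelling the left factor $\varphi$ --- legitimate off the zero set of $\varphi^{s}$ via $\varphi^{-\ast}$, hence everywhere by the identity principle --- yields $g=\lambda\ast g_{1}$, that is $g_{1}=\overline{\lambda}\ast g$, as claimed.
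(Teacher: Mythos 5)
Your overall architecture coincides with the one the paper relies on: Theorem \ref{thm-factorization} is imported from the reference \cite{ale-giulia}, whose proof indeed runs through a Beurling-type theorem for the shift on $H^2(\B)$, produces $f=\varphi\ast g$ with $\varphi$ inner and $g$ cyclic, and then invokes the equivalence of cyclicity and outerness. The formal parts of your plan --- $[f]=\varphi\ast H^2(\B)$, cyclicity of $g$, and the uniqueness argument via $\ast$-cancellation off the zero set of $\varphi^s$ --- are sound (though the fact that the wandering submodule is generated by a \emph{single} element is itself a nontrivial piece of the Beurling theorem that you only assert). The genuine problem is the step you yourself single out as the main obstacle: proving that the unit generator $\varphi$ is inner.

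Your proposed detour through the symmetrization does not close. Even granting that you can establish $|\varphi^s|=1$ $\Sigma$-almost everywhere on $\p\B$, the final implication ``$|\varphi^s|\equiv 1$ on $\p\B$, whence $\varphi$ is inner'' is essentially Problem (B) in Section \ref{sec-problems}, which the authors state as open: from $\varphi^s(q)=\varphi(q)\,\varphi^c\bigl(T_{\varphi^c}(q)\bigr)$ one only gets $|\varphi(q)|\,|\varphi^c(q')|=1$ for some \emph{other} point $q'$ on the same sphere, and nothing forces each factor to equal $1$. There is also a circularity risk upstream: for a general multiplier, the isometry of $h\mapsto\varphi\ast h$ on all of $H^2(\B)$ is equivalent to $|\varphi|=1$ a.e.\ on $\p\B$, so it must be \emph{derived} from the orthonormality of $\{q^n\ast\varphi\}_{n\ge 0}$ by density of polynomials, not invoked as an independent fact. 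The gap is fillable, and the paper hands you the tool: a unit vector $\varphi\in[f]\ominus(q\ast[f])$ satisfies $\langle q^k\ast\varphi,\varphi\rangle=\delta_k(0)$ for all $k\in\N$, because $q^k\ast\varphi\in q\ast[f]$ for $k\ge 1$; by Theorem \ref{thm-condition-inner} (equivalently Proposition \ref{thmI}, i.e.\ the identity $\widetilde{\varphi}\ast\varphi^c=1$ a.e.\ together with the multiplier argument showing $\varphi\in H^\infty(\B)$) this moment condition is precisely the characterization of inner functions. Replacing your symmetrization argument by this observation brings the plan in line with the proof in the cited reference.
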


The proof of this theorem makes use of the concept of cyclicity. 

\begin{defn}\label{defn-cyclic}
	A function $g$ is {\em cyclic} in $H^2(\BB)$ if  
	\begin{equation}\label{outer}
	[g]:=\overline{\Span\left\{q^n\ast g, n\geq 0\right\}}=H^2(\BB).
	\end{equation}
\end{defn}
We stress out that $[g]$ is the smallest closed invariant subspace of $H^2(\B)$ containing $g$. Thus, $g$ is cyclic if the smallest closed subspace containing $g$ is the space $H^2(\mathbb B)$ itself.
In \cite{ale-giulia} it is firstly proved that each function $f\in\ H^2(\BB)$ admits a factorization $f=\varphi\ast g$ with $\varphi$ inner and $g$ cyclic. Afterwards, being cyclic is proved equivalent to being outer in the sense of Definition \ref{defn-outer}.

We remark that we work with right quaternionic Hilbert spaces, therefore the left-hand side of \eqref{outer} denotes the closure in $H^2(\BB)$ of elements of the form 
\[\sum_{n=0}^m (q^n\ast g)a_n=\sum_{n=0}^m (g\ast q^n)a_n=g*p_m,\] 
where $p_m$ is a quaternionic polynomial with scalar coefficients $\{a_n\}_{n=0}^m\subseteq\mathbb H$.

\section{Inner functions}\label{sec-inner}
Let us now focus on inner functions. To start, we would like to better understand any connection between $f$ being an inner function in $H^2(\mathbb B)$  and the properties of $f_I$ (the restriction of $f$ to the slice $L_I=\mathbb R+\mathbb RI$), or of the splitting components of $f$ (see Lemma \ref{splitting-lemma}).
Some of the results we include in this section are implicit in \cite{ale-giulia}. Here we state them explicitly and we make some remarks. 

We first prove a characterization of inner functions in $H^2(\mathbb B)$. In the following statement the $*$-product is the extension of \eqref{defn-product} to the more general setting of slice $L^2$ functions on $\p\B$, that is, the space $L_s^2(\p\B)$ of
functions of the form $q\mapsto \sum_{k\in \Z}q^ka_k$ with $q\in \p\B$ and $\{a_k\}_{k\in\Z}\in \ell^2$, see \cite{ale-giulia}. If $f(q)=\sum_{n\in\Z} q^n a_n$ and $g(q)=\sum_{n\in\Z} q^n b_n$ belong to $L_s^2(\p\B)$, then
\begin{equation}\label{product-slice}
f\ast g(q):=\sum_{n\in\Z}q^n\sum_{k\in\Z}a_k b_{n-k}. 
\end{equation}
Clearly, the boundary value function of every $f\in H^2(\B)$ is a slice $L^2$ function.
\begin{pro}\label{thmI}
	Let $f\in H^2(\B)$. Then, the following are equivalent:
	\begin{enumerate}[(i)]
		\item $f$ is inner;
		\item $\widetilde f\ast f^c=f^c\ast\widetilde f=1 \quad \Sigma$-almost everywhere on $\p\B$;
		\item there exists $I\in\s$ such that $(\widetilde f\ast f^c)_I=(f^c\ast\widetilde f)_I=1$ almost everywhere with respect to the induced Lebesgue measure on $\p\B_I$.
	\end{enumerate}  
\end{pro}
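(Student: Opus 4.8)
The strategy is to transfer everything to the complex plane on a single slice by means of the Splitting Lemma, isolating the genuinely quaternionic content in the way different slices are linked by the representation formula \eqref{repfor2}. I would first settle (ii) $\Leftrightarrow$ (iii). Both $\widetilde f\ast f^c$ and $f^c\ast\widetilde f$ are slice functions in $L^2_s(\partial\B)$, and such a function is determined by its restriction to a single slice through \eqref{repfor2}: if $(\widetilde f\ast f^c)_I=1$ a.e. on $\partial\B_I$, then
\[(\widetilde f\ast f^c)(e^{tK})=\tfrac{1-KI}{2}(\widetilde f\ast f^c)(e^{tI})+\tfrac{1+KI}{2}(\widetilde f\ast f^c)(e^{-tI})=1\]
for every $K\in\s$ and a.e.\ $t$, and since the exceptional set in $t$ is Lebesgue-null it is also $\Sigma$-null. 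Thus (iii) $\Rightarrow$ (ii), while (ii) $\Rightarrow$ (iii) is immediate.

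For the remaining equivalence I would fix $I\in\s$ and $J\in\s$ with $J$ orthogonal to $I$, write $f_I(z)=F(z)+G(z)J$ as in Lemma \ref{splitting-lemma}, so that $|f(z)|^2=|F(z)|^2+|G(z)|^2$ on $\B_I$, and compute the $\ast$-product in these coordinates. Using $Jw=\overline w\,J$ for $w\in L_I$, together with $f^c_I(z)=\overline{F(\bar z)}-G(z)J$ and $\widetilde f_I(z)=F(\bar z)+G(\bar z)J$, one is led to the key identity
\[(\widetilde f\ast f^c)_I(z)=|f(\bar z)|^2+\big(G(\bar z)F(z)-F(\bar z)G(z)\big)J,\]
whose scalar part equals $|f(\bar z)|^2$. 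Hence $(\widetilde f\ast f^c)_I\equiv1$ is equivalent to the two conditions $|f(\bar z)|=1$ and $\Delta(z):=F(z)G(\bar z)-F(\bar z)G(z)=0$, a.e.\ on $\partial\B_I$. The companion equation $f^c\ast\widetilde f=1$ is then automatic: since $f\not\equiv0$ forces $f^s\not\equiv0$, the function $f^c$ is two-sidedly $\ast$-invertible, so $\widetilde f\ast f^c=1$ already yields $\widetilde f=(f^c)^{-\ast}$, whence $f^c\ast\widetilde f=1$.

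It remains to identify the pair $\{|f(\bar z)|=1,\ \Delta\equiv0\}$ with innerness, and this is where the main difficulty lies. The device is to expand $|f(e^{tK})|^2$ for variable $K\in\s$ by \eqref{repfor2}: writing $f(e^{tK})=p+KI\,r$ with $p=\tfrac12\big(f(e^{tI})+f(e^{-tI})\big)$ and $r=\tfrac12\big(f(e^{-tI})-f(e^{tI})\big)$, one gets $|f(e^{tK})|^2=|p|^2+|r|^2+2\,\RRe(\overline p\,KI\,r)$. The last term is, up to sign, the Euclidean pairing of $K$ with $\IIm(Ir\overline p)$, so it vanishes for all $K\in\s$ exactly when $\IIm(Ir\overline p)=0$, and a short expansion in the splitting coordinates shows this to be equivalent to $\Delta(z)=0$. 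Consequently $|f|$ is constant on the sphere $e^{t\s}$ if and only if $\Delta(z)=0$, the constant value then being $|f(e^{tI})|$. This closes both implications at once: if $f$ is inner then $|f|\equiv1$ on $\partial\B$, hence constant on every sphere, forcing $\Delta\equiv0$ while $|f(\bar z)|=1$, i.e.\ (iii); conversely $\{|f(\bar z)|=1,\ \Delta\equiv0\}$ makes $|f|$ equal to the constant $1$ on each sphere $e^{t\s}$, so $|f|=1$ $\Sigma$-a.e.\ on $\partial\B$, and since an $H^2$ function with unimodular boundary values lies in $H^\infty$ with $|f|\le1$ on $\B$ (as in \cite{deFGS,ale-giulia}), $f$ is inner. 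I expect this last step---that constancy of $|f|$ across the spheres $e^{t\s}$ is precisely encoded by $\Delta\equiv0$---to be the crux, as it is exactly the feature absent from the classical one-variable theory.
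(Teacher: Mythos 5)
Your argument takes a genuinely different route from the paper's on the main equivalence. The paper handles (i)$\Leftrightarrow$(ii) by quoting Lemma~2.4 of \cite{ale-giulia} (which already asserts that $\widetilde f\ast f^c=f^c\ast\widetilde f=1$ $\Sigma$-a.e.\ is equivalent to $|f|=1$ $\Sigma$-a.e.) and then, for the hard direction, by showing that $|f|=1$ a.e.\ forces $f$ to be an isometric multiplier of $H^2(\B)$, whence $f\in H^\infty(\B)$ with $\|f\|_{H^\infty}=1$ by \cite[Corollary 3.5]{ABCS}; the passage between (ii) and (iii) via the representation formula is the same in both texts. You instead re-derive the boundary identity through the Splitting Lemma --- your formula for $(\widetilde f\ast f^c)_I$ is exactly the computation the paper postpones to the proof of Theorem~\ref{thm-inner-components}, i.e.\ system \eqref{eqn13} --- and you add an attractive geometric reading of the condition $\Delta=0$ via the affine dependence of $|f(e^{tK})|^2$ on $K\in\s$. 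One correction there: $\IIm(Ir\overline{p})=0$ is \emph{not} equivalent to $\Delta(z)=0$ alone; its component along $I$ equals $\frac14\bigl(|f(e^{-tI})|^2-|f(e^{tI})|^2\bigr)$, so the correct statement is that $|f|$ is constant on the sphere $e^{t\s}$ if and only if $\Delta(e^{tI})=0$ \emph{and} $|f(e^{tI})|=|f(e^{-tI})|$. Both directions of your argument do supply the missing symmetry condition (in one direction both moduli equal $1$; in the other, $e^{tI}$ and $e^{-tI}=e^{t(-I)}$ lie on the same sphere), so the proof survives, but the equivalence as you state it is false for a general $f$.

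The genuine gap is the closing step. The claim that an $H^2(\B)$ function with $|f|=1$ $\Sigma$-a.e.\ on $\p\B$ automatically satisfies $|f|\le 1$ on $\B$ is not a quotable statement of \cite{deFGS} or \cite{ale-giulia}: it \emph{is} the analytic content of (ii)$\Rightarrow$(i), and it is precisely where the paper's proof does its work. You must supply an argument; two are available. Either reproduce the paper's multiplier computation ($\|f\ast g\|_{H^2}=\|g\|_{H^2}$ for all $g$, then \cite[Corollary 3.5]{ABCS}), or invoke Theorem~\ref{thm-factorization}: write $f=\varphi\ast g$ with $\varphi$ inner and $g$ outer, observe that $|g|=1$ $\Sigma$-a.e.\ on $\p\B$, and compare $g$ with the outer function $1$ in Definition~\ref{defn-outer} in both directions to get $|g|\equiv1$ on $\B$, so that $g$ is a unimodular constant by the maximum modulus principle and $f$ is inner. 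A smaller loose end of the same nature: deducing $f^c\ast\widetilde f=1$ from $\widetilde f\ast f^c=1$ by ``two-sided $\ast$-invertibility of $f^c$'' presupposes associativity and invertibility of the $\ast$-product on $L^2_s(\p\B)$, which you have not established; it is safer to run the companion splitting computation for $(f^c\ast\widetilde f)_I$ directly, which yields an analogous system.
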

\vspace{-0.6cm}
\begin{proof}
	Recalling Lemma 2.4 in \cite{ale-giulia}, we know that $\widetilde f\ast f^c=f^c\ast\widetilde f=1$ $\Sigma$-almost everywhere on $\p\B$ if and only if $|f|=1$ $\Sigma$-almost everywhere on $\p\B$. Then, if $f$ is inner, we immediately get the first implication of the statement. 
	
	Suppose now that $\widetilde f\ast f^c=f^c\ast\widetilde f=1$ $\Sigma$-almost everywhere on $\p\B$, so that $|f|=1$ $\Sigma$-almost everywhere on $\partial \mathbb B$. Let $g\in H^2(\B)$, $g\not \equiv 0$, and denote by $Z_{g^s}=\{q\in\B \, : \, g^s(q)=0\}$. Proposition 2.3 in \cite{ale-giulia} guarantees that $\Sigma(Z_{g^s})=0$, whereas Proposition 5.32 in \cite{libroGSS} guarantees that the map $T_{g}:\p\B\setminus Z_{g^s}\to\p\B\setminus Z_{g^s}$ is a bijection. Therefore, we have
	\begin{equation*}
	\begin{aligned}
	\|f*g\|^2_{H^2}&\!=\!\|g^c*f^c\|^2_{H^2}\!=\!\int_{\p\B\setminus Z_{g^s}}\!\!\!\!|g^c(q)|^2|f^c(T_g(q))|^2d\Sigma(q)=\!\!\!\!\!\int_{\p\B\setminus Z_{g^s}}\!\!\!|g^c(q)|^2d\Sigma(q)=\|g^c\|^2_{H^2}=\|g\|^2_{H^2},
	\end{aligned}
	\end{equation*}
	where we used that  $|f|=1$ $\Sigma$-almost everywhere on $\p\B$ if and only if the same holds true for $|f^c|$ (see \cite[Proposition 5]{DRGS}). This implies that $f$ is a multiplier for $H^2(\B)$. Thanks to \cite[Corollary 3.5]{ABCS}
	we conclude that $f\in H^{\infty}(\B)$ and, in particular, that $f$ is an inner function. Hence, conditions $(i)$ and $(ii)$ are equivalent.
	
	Clearly $(ii)$ implies $(iii)$. Suppose now that condition $(iii)$ holds. Then, for almost every $t\in[0,\pi)$, we have both 
	\[1=\widetilde f\ast f^c(e^{tI}) \quad \text{and} \quad 1=\widetilde f\ast f^c(e^{(t+\pi)I})=\widetilde f\ast f^c(e^{-tI}).\]
	Using Formula \eqref{repfor2} we obtain that, for any $J\in \s$,
	\[\widetilde f\ast f^c(e^{tJ})=\frac{1-JI}{2}+\frac{1+JI}{2}=1.\]
	Recalling that $d\Sigma(e^{tI})=dtd\sigma(I)$, see \eqref{Sigma}, we immediately get $(ii)$. 
\end{proof}

\begin{oss}\label{remark}
	We point out that the previous proof actually showed that condition $(iii)$ implies $(\widetilde f\ast f^c)_I=(f^c\ast\widetilde f)_I=1$ almost everywhere on $\p\B_I$ for \emph{any} $I\in\s$.
\end{oss}
By means of the previous result we obtain another characterization of inner functions in $H^2(\B)$ which is often used as the definition of inner functions in more abstract Hilbert spaces (see, for instance, \cite{secochar}). Recalling the notations from \eqref{innerproduct} and \eqref{inner-product-slice2} and denoting by $\delta_k(j)$ the Kronecker delta, we have the following theorem.
\begin{teo}\label{thm-condition-inner}
	Let $f\in H^2(\B)$. Then, the following are equivalent:
	\begin{enumerate}[(i)]
		\item $f$ is inner;
		\item for all $k \in \N$, we have 
		\[ \big< q^k\ast f,f\big>= \delta_k(0);\] 
  		\item there exists $I\in\SS$ such that, for all $k \in \N$, we have 
		\[\big< q^k\ast f,f\big>_I= \delta_k(0).\]
	\end{enumerate}
\end{teo}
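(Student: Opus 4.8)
The plan is to derive the three-way equivalence from the characterization of inner functions already established in Proposition~\ref{thmI}, after re-expressing the correlations $\langle q^k\ast f,f\rangle$ through the boundary function $f^c\ast\widetilde f$. The equivalence of $(ii)$ and $(iii)$ needs essentially no work: by \eqref{inner-product-slice} the global and the slice inner products coincide, $\langle q^k\ast f,f\rangle=\langle q^k\ast f,f\rangle_I$ for \emph{every} $I\in\SS$, so $(ii)$ and $(iii)$ are the same statement read on one slice versus all slices; in particular $(ii)$ gives $(iii)$ for every $I$, and $(iii)$ on a single slice gives $(ii)$. It thus remains to prove $(i)\Leftrightarrow(ii)$.

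First I would compute the correlations explicitly. Since $q^k$ has real (scalar) coefficients, $q^k\ast f=\sum_{n}q^{n}a_{n-k}$, whence by \eqref{innerproduct}
\[
\langle q^k\ast f,f\rangle=\sum_{n\in\N}\overline{a_n}\,a_{n-k}=\sum_{j\in\N}\overline{a_{j+k}}\,a_j,\qquad k\in\N .
\]
On the other hand, expanding the boundary function $f^c\ast\widetilde f$ as a slice $L^2$ series via \eqref{product-slice} (recall $f^c(q)=\sum_{n}q^n\overline{a_n}$ and, on $\partial\B$, $\widetilde f(q)=f(\overline q)=\sum_n q^{-n}a_n$), a direct bookkeeping shows that for $k\ge 0$ the coefficient of $q^{k}$ is exactly $\sum_{j}\overline{a_{j+k}}\,a_j$ (in particular $\|f\|^2$ for $k=0$), while the coefficient of $q^{-k}$ is its conjugate. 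Hence the whole Laurent series $f^c\ast\widetilde f$ is determined by the numbers $\langle q^k\ast f,f\rangle$, $k\in\N$, and
\[
f^c\ast\widetilde f=1\ \ \Sigma\text{-a.e.}\quad\Longleftrightarrow\quad \langle q^k\ast f,f\rangle=\delta_k(0)\ \ \text{for all }k\in\N .
\]
This already yields $(i)\Rightarrow(ii)$: if $f$ is inner then $f^c\ast\widetilde f=1$ $\Sigma$-a.e. by Proposition~\ref{thmI}, hence $(ii)$.

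For $(ii)\Rightarrow(i)$ the delicate point — the step I expect to be the main obstacle — is that Proposition~\ref{thmI} characterizes inner functions through the \emph{two}-sided identity $\widetilde f\ast f^c=f^c\ast\widetilde f=1$, whereas $(ii)$ only encodes the single product $f^c\ast\widetilde f=1$. Indeed, the correlations see the coefficients $\sum_j\overline{a_{j+k}}a_j$ but not $\sum_j a_j\overline{a_{j+k}}$, and the two differ because $\HH$ is noncommutative. To close this gap I would $\ast$-multiply $f^c\ast\widetilde f=1$ on the left by $f$, getting $f^s\ast\widetilde f=f$ since $f\ast f^c=f^s$ by \eqref{fs}, and then $\ast$-multiply on the right by $f^c$, getting $f^s\ast(\widetilde f\ast f^c)=f\ast f^c=f^s$. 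Since $f^s$ is slice preserving its coefficients are real, so $f^s$ is $\ast$-central and acts by ordinary pointwise multiplication; moreover, exactly as in the proof of Proposition~\ref{thmI}, $f^s\neq 0$ $\Sigma$-almost everywhere. Dividing by $f^s$ on the full-measure set where it does not vanish therefore gives $\widetilde f\ast f^c=1$ $\Sigma$-a.e. as well, and with both one-sided identities in hand Proposition~\ref{thmI} applies and $f$ is inner.

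Beyond this reconciliation of one- versus two-sided products, the remaining ingredients are routine: the coefficient computation for $f^c\ast\widetilde f$, and a check that the above $\ast$-manipulations are legitimate for boundary functions in $L^2_s(\partial\B)$ — which is cleanest to carry out pointwise, on the set $\{f^s\neq0\}$, via the pointwise product formula \eqref{star-product-pointwise}. Finally, as observed in Remark~\ref{remark}, once $(iii)$ holds on one slice it holds on every slice, so no slice plays a privileged role.
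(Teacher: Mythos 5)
Your argument is correct and follows essentially the same route as the paper: identify $\big<q^k\ast f,f\big>$ with the $k$-th Laurent coefficient of the boundary function $f^c\ast\widetilde f$, recover the negative coefficients by conjugate symmetry of the $L^2$ pairing, and invoke Proposition \ref{thmI} together with \eqref{inner-product-slice} for the equivalence with the slice statement. The one place you go beyond the paper is the passage from the one-sided identity $f^c\ast\widetilde f=1$ to the two-sided identity appearing in Proposition \ref{thmI}(ii): the paper stops at $f^c\ast\widetilde f=1$ and leaves that step implicit, whereas your multiplication by $f$ and $f^c$ and division by the real-coefficient, a.e.\ nonvanishing $f^s$ supplies it explicitly (modulo the associativity checks you rightly flag), which is a minor but genuine gain in rigor.
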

\vspace{-0.6cm}
\begin{proof}
	Let $f(q)=\sum_{n\in \N}q^na_n\in H^2(\B)$. Then, for $q\in\p\B$, we get 
	$$
	\big< q^k\ast f,f\big>=\Big< \sum_{n\ge 0}q^{n+k}a_n,\sum_{n\ge 0}q^na_n \Big> =  \sum_{n\geq \max\{0,k\}}\overline{a_n}a_{n-k}.
	$$
	We point out that $q^k\ast f$ has to be interpreted as a $\ast$-product in the setting of slice $L^2$ functions as defined in \eqref{product-slice}. Moreover, for $\Sigma$-almost any $q\in\partial\B$, it holds that
	\begin{equation*}
	f^c(q)=\sum_{n\geq 0} q^n\overline{a_n}\qquad\text{ and }\qquad \widetilde f(q)=\sum_{n\leq 0}q^n a_{-n}.
	\end{equation*}
	%If $f(q)=\sum_{n\in\N}q^na_n$ is in $H^2(\B)$, then $Sigma$-almost everywhere on $\p\B$ the boundary value of $\widetilde f$ is $\widetilde f(q)=\sum_{n\in\N}q^{-n}a_n$, and 
	Hence,%, extending the notion of $*$-product to the setting of slice $L^2$ functions, we can compute for $q\in \p\B$
	%In particular, we get
	\begin{equation*}
	f^c\ast\widetilde f(q)=\sum_{k\in \Z}q^k\sum_{n\in\Z} \overline{a_n}a_{n-k}=\sum_{k\in \Z}q^k\sum_{n\geq \max\{0,k\}} \overline{a_n}a_{n-k}.
	\end{equation*}
	Therefore, for any $k\in\N$, we obtain that $\big< q^k\ast f,f\big>$ is the $k$-th coefficient in the power series expansion of $f^c\ast\widetilde f$. From this fact and Proposition \ref{thmI} is now easy to deduce that $(ii)$ follows from $(i)$. The reverse implication can be proved using the natural extension of the $H^2$ inner product to the bigger space of slice $L^2$ functions, namely, 
	\[\Big<\sum_{n\in \Z}q^na_n, \sum_{n\in\Z}q^nb_n \Big>_{L^2}=\sum_{n\in\Z}\overline{b_n}a_n.\] 
	In fact, suppose $(ii)$ holds. Then, the $k$-th coefficient in the power series expansion of $f^c\ast\widetilde f$ vanishes for $k>0$ and equals $1$ for $k=0$. To show that all the coefficients with $k=-n<0$ equal zero consider  
	\[\langle q^{-n}\ast f,f \rangle_{L^2}=\langle f,q^{n}\ast f \rangle_{L^2}=\overline{\langle q^{n}\ast f,f \rangle}=0,\]
	thus the first part of the theorem is proved.  
	
	From \eqref{inner-product-slice}, we obtain that $(ii)$ is equivalent to $(iii)$. In fact, the inner product of $H^2(\B)$ can be computed on a single slice and it does not depend on the choice of the slice.
\end{proof}

In general, the restriction $f_I$ of $f$ to the slice $L_I$ is a function of one complex variable, but still quaternion-valued. If $f_I$ were a complex-valued function, then $f_I$ would truly be an inner function of $H^2(\mathbb D)$. Therefore, Theorem \ref{thm-condition-inner} guarantees that the restriction to any slice $L_I$ of a slice regular inner function of $H^2(\B)$ is ``almost'' an inner function of $H^2(\mathbb D)$.  At this point it is natural to question about a simple converse. We wonder whether any inner function $F\in H^2(\mathbb D)$ admits a slice regular extension $f:=\ext(F)$ to the unit ball $\B$ such that $f$ is inner for $H^2(\mathbb B)$. 
%We recall  (see \cite{deFGS}) that if $F\in H^2(\mathbb D)$ then its slice regular extension $f=\ext(F)\in H^2(\B)$. 
Here we are identifying $\D$ with $\B_i$.
%We recall (see \cite{libroGSS,deFGS}) a standard way to extend $F\in H^2(\mathbb D)$ to a slice regular function $f=\ext(F)\in H^2(\B)$: For any slice regular function $f:\B\to\HH$, let $I,J\in\SS$ and $x+yI\in\B$  be given. Then
%$$
%f(x+yI)=\frac{1-IJ}{2}f(x+yJ)+\frac{1+IJ}{2}f(x-yJ).
%$$
%Thus, if $F$ is complex-valued holomorphic function on the slice $L_J$, the slice regular extension of $F$ is defined by means of the above formula: for any $I\in\SS$ we set
%$$
%f(x+yI):=\ext(F)(x+yI):=\frac{1-IJ}{2}F(x+yJ)+\frac{1+IJ}{2}F(x-yJ).
%$$
We obtain an answer in the form of the following corollary.
\begin{coro}
	Let $F\in H^2(\mathbb D)$ be an inner function. Then, the slice regular extension $f=\ext(F)$ is an inner function of $H^2(\mathbb B)$. 
\end{coro}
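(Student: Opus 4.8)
The plan is to verify condition $(iii)$ of Theorem \ref{thm-condition-inner} for the single distinguished slice $I=i$, exploiting the fact that on $L_i$ every computation collapses to a commutative, complex-variable one. Since $(ii)\Leftrightarrow(iii)$ in that theorem, establishing the slice condition at $I=i$ suffices to conclude that $f$ is inner.

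First I would record that the restriction of $f=\ext(F)$ to $\B_i$ coincides with $F$: evaluating the extension formula with $J=I=i$ and using $i^2=-1$ gives the coefficients $\tfrac{1-i\cdot i}{2}=1$ and $\tfrac{1+i\cdot i}{2}=0$, so $\ext(F)(x+yi)=F(x+yi)$. Because $F$ is inner in $H^2(\D)$ it is in particular $\C$-valued, so its Taylor coefficients — which are precisely the coefficients $a_n$ in the expansion $f(q)=\sum_n q^n a_n$ — all lie in $L_i$. This is the crucial structural fact: on the slice $L_i$ the coefficients of $f$ commute with the powers of $q=e^{\theta i}$.

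Next I would compute $\langle q^k\ast f,f\rangle_i$ directly from \eqref{inner-product-slice2}. Since the coefficients are complex, the $\ast$-product restricted to $L_i$ (interpreted in the slice $L^2$ sense of \eqref{product-slice}) agrees with the ordinary pointwise product, so $(q^k\ast f)(e^{\theta i})=e^{ik\theta}F(e^{\theta i})$; consequently the slice inner product reduces to the classical $H^2(\D)$ pairing,
\[
\langle q^k\ast f,f\rangle_i=\frac{1}{2\pi}\int_0^{2\pi} e^{ik\theta}\,|F(e^{\theta i})|^2\,d\theta.
\]
Invoking that $|F|=1$ almost everywhere on $\partial\D$ — the defining property of a classical inner function — the integrand becomes $e^{ik\theta}$, whose average over $[0,2\pi)$ is exactly $\delta_k(0)$. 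Hence condition $(iii)$ of Theorem \ref{thm-condition-inner} holds with $I=i$, and $f$ is inner.

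The only genuine point requiring care — and the place where the hypothesis that $F$ is $\C$-valued is indispensable — is the identification on $L_i$ of the quaternionic $\ast$-product and slice inner product with their commutative complex counterparts. Once the coefficients $a_n$ are known to lie in $L_i$, this is immediate; but without it the rearrangement $\overline{F}\,e^{ik\theta}F=e^{ik\theta}|F|^2$ would fail, and the computation would not reduce to the scalar integral $\int_0^{2\pi} e^{ik\theta}\,d\theta$.
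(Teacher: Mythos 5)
Your proof is correct and follows the same route as the paper: both verify condition $(iii)$ of Theorem \ref{thm-condition-inner} on the slice $L_i$, where the paper simply asserts it and you supply the (correct) computation showing the slice inner product collapses to $\frac{1}{2\pi}\int_0^{2\pi}e^{ik\theta}|F(e^{i\theta})|^2\,d\theta=\delta_k(0)$. The added detail, in particular the observation that the coefficients $a_n$ lie in $L_i$ and hence commute on that slice, is exactly what the paper leaves implicit.
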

\begin{proof}
	Since the inner product of $H^2(\B)$ can be computed on any slice, it is clear that $f=\ext(F)\in H^2(\mathbb B)$ whenever $F\in H^2(\mathbb D)$. The conclusion now follows from Theorem \ref{thm-condition-inner} since condition $(iii)$ is satisfied for $I=i$.
\end{proof}

We explicitly point out that not all the inner functions of $H^2(\mathbb B)$ trivially arise as the regular extension of some inner function of $H^2(\mathbb D)$. % We point out that there exist inner functions of $H^2(\mathbb B)$ that are truly of the quaternionic setting: they are not just the slice regular extension of some inner function of $H^2(\mathbb D)$. 
In fact, if a slice regular function $f$ is the extension of a complex inner function $F$, then it necessarily preserves the slice $L_i$, i.e. $f(\B_i)\subseteq L_i$. Recalling Proposition  \ref{zerislice}, we have that if a function preserves a slice, then all its isolated, non-spherical, zeros are contained in that slice. It is enough to take a slice regular Blaschke product which has at least two zeros that are not on the same slice (and neither on the same sphere). See \cite{deFGS} for an explicit construction of such a function.

So far we investigated how $f$ being an inner function in $H^2(\B)$ affects the restriction of $f$ to any slice. We also want to understand how being an inner function affects the splitting components of the function (see Lemma \ref{splitting-lemma}). The following is our best result in this direction.

\begin{teo}\label{thm-inner-components}
	Let $f\in H^2(\B)$, $I,J\in\SS$, $J$ orthogonal to $I$, and $F,G:\B_I\to\CC$ be holomorphic functions so that, for any $z\in\B_I$,
	\begin{equation}\label{thm-splitting}
	f_I(z)=F(z)+G(z)J.
	\end{equation}
	Then, $f$ is inner if and only if for Lebesgue-almost every $x\in\partial \B_I$, the following conditions hold:\begin{equation}\label{eqn13}
	\left\{\begin{array}{l r}
	|F(z)|^2+|G(z)|^2=1  \\
	
	\phantom{\footnotesize{sss}}\\
	
	F(z)G(\overline{z})=F(\overline{z})G(z).
	\end{array}\right. 
	\end{equation}
\end{teo}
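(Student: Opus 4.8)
The plan is to reduce the statement to the slicewise characterization of Proposition \ref{thmI}. Since the relevant condition may be tested on a single slice (see condition $(iii)$ there together with Remark \ref{remark}), $f$ is inner if and only if $(\widetilde f\ast f^c)_I=1$ almost everywhere on $\partial\B_I$. So the whole problem becomes the explicit computation of the boundary values of $\widetilde f\ast f^c$, restricted to $L_I$, in terms of the splitting components $F$ and $G$ of \eqref{thm-splitting}.

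First I would record the splittings of the three functions entering the computation. Writing $a_n=\alpha_n+\beta_n J$ with $\alpha_n,\beta_n\in L_I\cong\CC$, so that $F(z)=\sum_n z^n\alpha_n$ and $G(z)=\sum_n z^n\beta_n$, a direct check using $J\lambda=\overline{\lambda}J$ for $\lambda\in L_I$ and $J^2=-1$ gives $\overline{a_n}=\overline{\alpha_n}-\beta_n J$; hence on $\partial\B_I$ one has $f^c_I(z)=\overline{F(\overline z)}-G(z)J$ and $\widetilde f_I(z)=F(\overline z)+G(\overline z)J$, where all conjugations are taken inside $L_I$. The key computational step is the multiplication rule for the $\ast$-product \eqref{product-slice} in terms of splitting components: applying the same relation $J\lambda=\overline{\lambda}J$ to the coefficient convolution shows that if two slice $L^2$ functions have components $(F_1,G_1)$ and $(F_2,G_2)$, then their $\ast$-product has components $(F_1F_2-G_1G_2^{\sharp},\ F_1G_2+G_1F_2^{\sharp})$, where $H^{\sharp}$ denotes the coefficientwise $L_I$-conjugate, which satisfies $H^{\sharp}(z)=\overline{H(\overline z)}$ on $\partial\B_I$. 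Specializing this to $\widetilde f\ast f^c$ and evaluating on the boundary, I expect to obtain
\[(\widetilde f\ast f^c)_I(z)=\bigl(|F(\overline z)|^2+|G(\overline z)|^2\bigr)+\bigl(F(z)G(\overline z)-F(\overline z)G(z)\bigr)J.\]

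Once this identity is in hand the theorem follows by separating the $L_I$-part from the $J$-part: the equation $(\widetilde f\ast f^c)_I(z)=1$ almost everywhere is equivalent to $|F(\overline z)|^2+|G(\overline z)|^2=1$ together with $F(z)G(\overline z)=F(\overline z)G(z)$ almost everywhere, and relabeling $z\leftrightarrow\overline z$ (a measure-preserving a.e.\ involution of $\partial\B_I$) turns the first equation into $|F(z)|^2+|G(z)|^2=1$ while leaving the second invariant. This is exactly system \eqref{eqn13}; note that the $L_I$-part merely reproduces $|f_I|=1$ on the slice, which alone would not characterize innerness, whereas the antisymmetric $J$-part encodes the behaviour of $f$ across the spheres $x+y\SS$. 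The main obstacle I anticipate is purely algebraic bookkeeping: keeping straight the quaternionic conjugate of $a_n$ versus the $L_I$-conjugation hidden in both $J\lambda=\overline{\lambda}J$ and in $H^{\sharp}$, and verifying the signs in the multiplication rule so that the $J$-component emerges as the antisymmetric combination rather than a symmetric one. A secondary point is the logical link to Proposition \ref{thmI}$(iii)$, which nominally requires both $(\widetilde f\ast f^c)_I=1$ and $(f^c\ast\widetilde f)_I=1$; I would either reuse the representation-formula argument of that proof to pass from the single-slice identity to the global one (via \eqref{repfor2}, together with Lemma~2.4 of \cite{ale-giulia}), or observe that under \eqref{eqn13} the two products in fact coincide, so that controlling one of them suffices.
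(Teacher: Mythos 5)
Your proposal is correct and follows essentially the same route as the paper: reduce to Proposition \ref{thmI}$(iii)$ together with Remark \ref{remark}, compute the splitting components of $\widetilde f$ and $f^c$ from $\overline{a_n}=\overline{\alpha_n}-\beta_nJ$, and carry out the coefficient convolution to obtain $(\widetilde f\ast f^c)_I(z)=|F(\overline z)|^2+|G(\overline z)|^2+\bigl(F(z)G(\overline z)-F(\overline z)G(z)\bigr)J$, which matches the paper's identity \eqref{splitting-product} exactly. Your closing remark about also needing $(f^c\ast\widetilde f)_I=1$ is a point the paper glosses over, so addressing it is a welcome (if minor) refinement rather than a deviation.
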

\begin{proof}
	
	If $F,G$ are the splitting components of $f_I$ as in \eqref{thm-splitting}, then, following \cite[Chapter 1]{libroGSS}, we get
	\begin{equation}\label{conditions-system}
	f^c_I(z)= \overline{F(\overline{z})}-G(z)J\qquad\text{ and }\qquad  \widetilde{f_I}(z)= F(\overline{z})+G(\overline z)J.
	\end{equation}
	Consider now the power series expansion  
	\[f(q)=\sum_{n\in \N}q^na_n=\sum_{n\in \N}q^n(\alpha_n+\beta_nJ),\]
	where $\alpha_n,\beta_n\in L_I$. Then,
	\[F(z)=\sum_{n\in \N}z^n\alpha_n, \quad \overline {F(\overline z)}=\sum_{n\in \N}z^n\overline{\alpha_n},\quad G(z)=\sum_{n\in \N}z^n\beta_n, \quad \overline{G(\overline{z})}=\sum_{n\in \N}z^n \overline{\beta_n}.\]
	Hence, for almost every $z\in \p\B_I$,
	\begin{equation}\label{splitting-product}
	\begin{aligned}
	(\widetilde f\ast f^c)_I(z)&= \Big(\sum_{n\le 0}z^{n}(\alpha_{-n}+\beta_{-n}J)\Big)*\Big(\sum_{n\ge 0}z^n(\overline{\alpha_n}-\beta_nJ)\Big)\\
	&= \sum_{n\in \Z}z^{n}\sum_{k\leq \min\{0,n\}}(\alpha_{-k}+\beta_{-k}J)(\overline{\alpha_{n-k}}-\beta_{n-k}J)\\
	&=\sum_{n\in \Z}z^{n}\sum_{k\leq \min\{0,n\}}(\alpha_{-k}\overline{\alpha_{n-k}}+\beta_{-k}\overline{\beta_{n-k}})+(\beta_{-k}\alpha_{n-k}-\alpha_{-k}\beta_{n-k})J\\
	&=F(\overline z)\overline{F(\overline z)}+ G(\overline z)\overline{G(\overline z)}+(G(\overline z)F(z)-F(\overline z)G(z))J.
	\end{aligned}
	\end{equation}
	% By direct computations using power series expansions or simply extending the expression of the $*$-product in terms of the splitting components also to slice functions, we get
	% \begin{equation}\label{splitting-product}
	% \widetilde f_I\ast f^c_I(z)= F(\overline z)\overline{F(\overline z)}+ G(\overline z)\overline{G(\overline z)}+(G(\overline z)F(z)-F(\overline z)G(z))J.
	% \end{equation}
	%Another way to prove the previous equality is by means of power series expansions.
	% Since $f\in H^2(\B)$, $F,G\in H^2(\B_I)$. Therefore, \eqref{splitting-product} extends from $\B$ to almost everywhere on $\p\B_I$. 
	Combining \eqref{splitting-product} with Remark \ref{remark} we get 
	$$
	F(\overline z)\overline{F(\overline z)}+ G(\overline z)\overline{G(\overline z)}+(G(\overline z)F(z)-F(\overline z)G(z))J=1
	$$
	for almost every $z\in\p\B_I$. This holds if and only if \eqref{eqn13} is satisfied.
\end{proof}

We conclude this section showing that the characterization of inner functions in \cite{secochar} involving their $H^2$ and $H^\infty$ norms works in the quaternionic setting as well.
\begin{teo}
	Let $f\in H^2(\B)$. The following are equivalent:
	\begin{enumerate}[(i)]
		\item $f$ is inner;
		\item $\|f\|_{H^2}=\|f\|_{H^\infty}=1$;
		\item $\|f\|_{H^2}=1$ and for all $k \in \N$ and $\lambda \in \HH$ we have
		\[ \| (q^k+\lambda) \ast f\|_{H^2} \leq \|q^k + \lambda\|_{H^2}.\]
	\end{enumerate}
\end{teo}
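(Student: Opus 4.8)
The plan is to prove the two equivalences $(i)\Leftrightarrow(ii)$ and $(i)\Leftrightarrow(iii)$ separately, using the integral representation \eqref{integral inner product} of the $H^2$ norm, the normalization $\Sigma(\p\B)=1$, and the characterizations of inner functions from Proposition \ref{thmI} and Theorem \ref{thm-condition-inner}. The directions $(i)\Leftrightarrow(ii)$ and $(i)\Rightarrow(iii)$ are soft consequences of the integral formula, whereas $(iii)\Rightarrow(i)$ requires an algebraic expansion of the norm.

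For $(i)\Leftrightarrow(ii)$ I argue directly from $\|f\|_{H^2}^2=\int_{\p\B}|f|^2\,d\Sigma$. If $f$ is inner, then $|f|\le1$ on $\B$ gives $\|f\|_{H^\infty}\le1$, while $|f|=1$ $\Sigma$-a.e.\ on $\p\B$ gives $\|f\|_{H^2}^2=\Sigma(\p\B)=1$; since $\Sigma$ is a probability measure, $\|f\|_{H^2}\le\|f\|_{H^\infty}$, so both norms equal $1$. Conversely, $\|f\|_{H^\infty}=1$ forces $|f|\le1$ on $\B$, hence $|f|\le1$ $\Sigma$-a.e.\ on $\p\B$ for the boundary values; then $\int_{\p\B}(1-|f|^2)\,d\Sigma=1-\|f\|_{H^2}^2=0$ with nonnegative integrand gives $|f|=1$ $\Sigma$-a.e., so $f$ is inner. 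For $(i)\Rightarrow(iii)$, $f$ inner again gives $\|f\|_{H^2}=1$; setting $h:=q^k+\lambda$, the pointwise formula \eqref{star-product-pointwise} gives $|h\ast f(q)|=|h(q)|\,|f(T_{h^c}(q))|$ for $\Sigma$-a.e.\ $q\in\p\B$, and since $T_{h^c}$ is a modulus-preserving bijection of $\p\B$ onto itself (off a $\Sigma$-null set), $|f\circ T_{h^c}|=1$ $\Sigma$-a.e., exactly as in the proof of Proposition \ref{thmI}. Hence $\|h\ast f\|_{H^2}^2=\int_{\p\B}|h|^2\,d\Sigma=\|h\|_{H^2}^2$ and $(iii)$ holds, with equality.

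It remains to prove $(iii)\Rightarrow(i)$, the heart of the matter. Writing $f=\sum_n q^na_n$ with $\sum_n|a_n|^2=1$, the $n$-th coefficient of $(q^k+\lambda)\ast f$ is $\lambda a_n+a_{n-k}$, so for $k\ge1$ (the case $k=0$ being a trivial identity)
\[\|(q^k+\lambda)\ast f\|_{H^2}^2=1+|\lambda|^2+2\RRe(\lambda A_k),\qquad A_k:=\sum_n a_n\,\overline{a_{n-k}},\]
while $\|q^k+\lambda\|_{H^2}^2=1+|\lambda|^2$. Thus $(iii)$ is equivalent to $\RRe(\lambda A_k)\le0$ for all $\lambda\in\HH$ and all $k\ge1$; replacing $\lambda$ by $-\lambda$ gives $\RRe(\lambda A_k)=0$ for all $\lambda$, and the choice $\lambda=\overline{A_k}$ forces $A_k=0$. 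Since $A_0=\sum_n|a_n|^2=1$ and $A_{-k}=\overline{A_k}$, the $A_k$ are exactly the coefficients of $f\ast\widetilde{f^c}$, so $f\ast\widetilde{f^c}=1$; applying the involution $g\mapsto\widetilde g$, which satisfies $\widetilde{g\ast h}=\widetilde g\ast\widetilde h$ and $\widetilde{\widetilde f}=f$, this is equivalent to $\widetilde f\ast f^c=1$. By \cite[Lemma 2.4]{ale-giulia} this yields $|f|=1$ $\Sigma$-a.e., and repeating the multiplier argument of Proposition \ref{thmI} we conclude $f\in H^\infty(\B)$, i.e.\ $f$ is inner. The main obstacle is precisely this last passage, for a genuinely quaternionic reason: noncommutativity places the factors of $A_k$ in the order $a_n\overline{a_{n-k}}$, identifying $A_k$ with the coefficients of $f\ast\widetilde{f^c}$ rather than of $f^c\ast\widetilde f=\sum_k q^k\sum_n\overline{a_n}a_{n-k}$ appearing in Theorem \ref{thm-condition-inner}; one must therefore route through $\widetilde{g\ast h}=\widetilde g\ast\widetilde h$ to reach the one-sided identity $\widetilde f\ast f^c=1$ and use that it alone characterizes $|f|=1$ a.e. It is also essential that $\lambda$ range over all of $\HH$, since a real or complex parameter would only force the real part of $A_k$ to vanish and would not suffice to conclude that the full quaternion $A_k$ is zero.
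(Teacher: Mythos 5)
Your proposal is correct in substance and shares the overall architecture of the paper's proof (a cycle through the three conditions, with $(iii)\Rightarrow(i)$ done by expanding the norm of $(q^k+\lambda)\ast f$ and choosing $\lambda$ to make the cross term positive), but it diverges in two places. For $(i)\Rightarrow(iii)$ the paper goes through $(ii)$ and the isometric identification of the multiplier algebra with $H^\infty(\B)$, while you use the pointwise formula \eqref{star-product-pointwise} and the bijectivity of $T_{h^c}$ on $\p\B$ minus a null set; both work, and yours even gives equality. More interestingly, in $(iii)\Rightarrow(i)$ your computation of the cross term is the correct one: it equals $2\RRe(\lambda A_k)$ with $A_k=\sum_n a_n\overline{a_{n-k}}$, whereas the paper writes it as $2\RRe\bigl(\overline{\lambda}\langle q^k\ast f,f\rangle\bigr)$ with $\langle q^k\ast f,f\rangle=\sum_n\overline{a_n}a_{n-k}$; for noncommuting $\lambda$ and $a_n$ these differ (a two-term example with $a_0=i$, $a_1=j$, $\lambda=k$ already shows it), so the paper's route directly into Theorem \ref{thm-condition-inner} is not literally available, and your detour through the ``transposed'' condition $A_k=\delta_k(0)$, i.e.\ $f\ast\widetilde{f^c}=1$, and then $\widetilde f\ast f^c=1$ via $\widetilde{g\ast h}=\widetilde g\ast\widetilde h$, is the honest way to close the argument. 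Your observation that $\lambda$ must range over all of $\HH$ is also on point.

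The one step you should not leave to a bare citation is the claim that the single identity $\widetilde f\ast f^c=1$ ($\Sigma$-a.e.) characterizes $|f|=1$ a.e.: Lemma 2.4 of \cite{ale-giulia} is quoted in this paper as the equivalence of $|f|=1$ a.e.\ with the \emph{pair} of identities $\widetilde f\ast f^c=f^c\ast\widetilde f=1$, and in a noncommutative $\ast$-algebra a one-sided inverse need not be two-sided, so you cannot simply read off the other order. The gap is genuinely fillable from material already in the paper: by the computation \eqref{splitting-product}, the $J$-free splitting component of $(\widetilde f\ast f^c)_I(z)$ is $|F(\overline z)|^2+|G(\overline z)|^2=|f_I(\overline z)|^2$, so $\widetilde f\ast f^c=1$ forces $|f_I|=1$ a.e.\ on $\p\B_I$ for $\sigma$-a.e.\ $I$, hence $|f|=1$ $\Sigma$-a.e.\ by Fubini and \eqref{Sigma}; from there the multiplier argument of Proposition \ref{thmI} finishes as you say. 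Add that sentence and the proof is complete.
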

\vspace{-0.6cm}
\begin{proof}
	Clearly, if $f$ is inner, then its $H^\infty$ norm equals $1$ and
	\[\|f\|^2_{H^2}=\int_{\p\B}|f|^2d\Sigma=1,\]
	% and  by restricting to a slice, we can compute its norm to be 1 and its supremum modulus is equal to 1, 
	that is, $(i)$ implies $(ii)$. We deduce that $(ii)$ implies $(iii)$ from the fact that the multiplier space of $H^2(\mathbb B)$ can be isometrically identified with $H^\infty(\mathbb B)$ and the fact that $\|f\|_{H^\infty}=1$ implies $\|g\ast f\|_{H^2}\leq \|g\|_{H^2}$ for any $g\in H^2(\mathbb B)$.
	% 	It is also clear that $(ii)$ implies $(iii)$, if we bear in mind that the multiplier space of $H^2$ coincides isometrically with $H^\infty$, since condition $\|f\|_{H^\infty}=1$ means then that $\|g*f\| \leq \|g\|$ for any $g \in H^2$. 
	To see that $(iii)$ implies $(i)$ we exploit Theorem \ref{thm-condition-inner}. If $\|f\|_{H^2}=1$ and $f$ is not inner, then there must be $k \in \N \backslash \{0\}$ such that \[\big< q^k *f , f \big> \neq 0.\] Choose such $k$ and notice that, for all $\lambda \in \HH$, we have \[\|q^k + \lambda\|_{H^2}^2 = 1 + |\lambda|^2.\]  Let us now compute the left-hand side in the condition in $(iii)$ with a $\lambda$ to be chosen later. It holds that
	\[ \| (q^k+\lambda) \ast f\|_{H^2}^2= \|q^k \ast f\|_{H^2}^2 + |\lambda|^2 \|f\|_{H^2}^2 + 2 \RRe (\overline{\lambda} \big< q^k *f , f \big>).\]
	Since the shift is an isometry over its image on $H^2(\B)$, we see that the first two addends of the right-hand side sum to $\|q^k + \lambda\|_{H^2}^2$. Choosing $\lambda=\big< q^k *f , f \big>$ we contradict the hypothesis in $(iii)$.
\end{proof}

\section{Outer functions}\label{sec-outer}

%As mentioned earlier, we say that a slice regular function $g$ in $H^2(\B)$ is \emph{outer} if for any other function $f \in H^2(\B)$ such that $|f|=|g|$ at $\Sigma$-almost every point of $\p \B$, we have $|g|\geq|f|$ at evert point of $\B$. 
In \cite{ale-giulia} it was shown that the Definition \ref{defn-outer} of outer functions is equivalent to the concept of cyclicity, extending a celebrated result of Beurling. Recall that a function $g \in H^2(\B)$ is cyclic if $[g]$, the smallest (closed) subspace of $H^2(\B)$ invariant under the action of the shift, is all of $H^2(\B)$.  What is currently missing in the quaternionic setting is an analogous of the classical characterization of outer functions on the unit disk in terms of the logarithm. Namely, $f\in H^2(\mathbb D)$ is outer if and only if 
\begin{equation}\label{outer-log}
f(z)= \alpha\exp\bigg\{\frac{1}{2\pi}\int_0^{2\pi}\log|f(e^{i\theta})|\frac{e^{i\theta}+z}{e^{i\theta}-z}\, d\theta\bigg\}
\end{equation}
for any $z\in\mathbb D$.
In this section we prove some preliminary results that go in the direction of finding an analogous characterization for quaternionic outer functions. We provide some necessary conditions as well as sufficient ones for a function to be outer. Some of these conditions are in terms of the symmetrization $f^s$ of the function $f$. We will see that, in some cases, $f$ being outer is equivalent to $f^s$ being outer. Since $f^s$ is slice preserving, a logarithm characterization for $f^s$ to be outer is available.
% 
% However, the traditional definition of outer in the classical Hardy space is sometimes a different one, despite the fact that our definition happens to be a characterization then. It seems natural to wonder whether there is an equivalent definition of outer in our quaternionic setting that is still equivalent to the one we presented, but closer to the classical definition in terms of the absolutely continuous measure defined on the boundary by the logarithm. 

Most of our proofs rely on cyclicity, thus similar proofs could work for function spaces in which outer and cyclic functions do not necessarily coincide.

Our first finding does not come as a surprise, but it will be used in what follows.
\begin{lem}\label{conjucyc}
	Let $f \in H^2(\B)$. Then, $f$ is outer if and only if $f^c$ is outer.
\end{lem}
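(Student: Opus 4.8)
The plan is to reduce the statement to cyclicity, using the equivalence (recalled above from \cite{ale-giulia}) between being outer and being cyclic. It then suffices to prove that $f$ is cyclic if and only if $f^c$ is cyclic. I will write $[g]=\overline{\{g\ast p : p \text{ a polynomial}\}}$ for the (right) cyclic subspace generated by $g$, as in the Remark following Definition \ref{defn-cyclic}, and introduce the auxiliary \emph{left} cyclic subspace $[g]_\ell := \overline{\{p\ast g : p \text{ a polynomial}\}}$, which will be forced on us by the conjugation map.

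The central tool is the conjugation $C\colon H^2(\B)\to H^2(\B)$, $C(g)=g^c$. From the explicit inner product \eqref{innerproduct} one reads off that $C$ is an additive isometry with $C^2=\mathrm{id}$, hence a bijection carrying closed subspaces to closed subspaces. Two algebraic facts drive the argument: conjugation reverses $\ast$-products, $(g\ast h)^c=h^c\ast g^c$, and $q^n$ is central for the $\ast$-product with $(q^n)^c=q^n$. Combining them, for any polynomial $p$ one gets $C(f\ast p)=p^c\ast f^c$; since $p\mapsto p^c$ is a bijection of the polynomials, applying the (continuous, bijective) map $C$ to $[f]$ yields $C([f])=[f^c]_\ell$. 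As $C$ is an isometric bijection fixing $H^2(\B)$, this gives the first key equivalence, valid for every $g\in H^2(\B)$:
\[ [g]=H^2(\B) \iff [g^c]_\ell=H^2(\B). \]

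The second ingredient is the containment $[g]\subseteq[g]_\ell$ for every $g$, which I would obtain by comparing orthogonal complements. A vector $v$ is orthogonal to $[g]$ exactly when $\langle q^k\ast g,v\rangle=0$ for all $k\ge 0$, whereas $v\perp[g]_\ell$ amounts to $\langle q^k\ast(\mu\ast g),v\rangle=0$ for all $k\ge 0$ and all $\mu\in\HH$. Specializing $\mu=1$ shows $[g]_\ell^{\perp}\subseteq[g]^{\perp}$, and passing to orthogonal complements gives $[g]\subseteq[g]_\ell$; in particular $[g]=H^2(\B)$ implies $[g]_\ell=H^2(\B)$. These two facts now combine cleanly: if $f$ is cyclic, then $[f]=H^2(\B)$, so the containment gives $[f]_\ell=H^2(\B)$, and the first equivalence applied to $g=f^c$ (using $(f^c)^c=f$) reads $[f^c]=H^2(\B)\iff[f]_\ell=H^2(\B)$, whence $[f^c]=H^2(\B)$, i.e.\ $f^c$ is cyclic. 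The reverse implication follows by exchanging the roles of $f$ and $f^c$. Translating back through the cyclic${}={}$outer dictionary finishes the proof.

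I expect the main obstacle to be precisely this left/right asymmetry: conjugation does \emph{not} send $[f]$ to $[f^c]$, but to the left-generated subspace $[f^c]_\ell$, because $C$ intertwines right scalar multiplication with left $\ast$-multiplication. The delicate points are therefore the bookkeeping establishing $C([f])=[f^c]_\ell$ and the orthogonality comparison giving $[g]\subseteq[g]_\ell$; once both are secured, the equivalence is forced by the symmetry $C^2=\mathrm{id}$, and the non-commutativity of $\HH$ causes no further trouble.
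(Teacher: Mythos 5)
Your reductions up to the last step are sound: being outer is equivalent to being cyclic, the conjugation $C(g)=g^c$ is an additive isometric involution of $H^2(\B)$, $(g\ast h)^c=h^c\ast g^c$, $q^k$ is $\ast$-central, and hence $C\big([f^c]\big)=\overline{\{p\ast f\}}=:[f]_\ell$, so that $[f^c]=H^2(\B)$ if and only if $[f]_\ell=H^2(\B)$. The gap is in the remaining step, the claim $[g]\subseteq[g]_\ell$. You correctly obtain $[g]_\ell^{\perp}\subseteq[g]^{\perp}$, but ``passing to orthogonal complements'' requires the biorthogonality identity $M^{\perp\perp}=M$, which in a right quaternionic Hilbert space holds only for \emph{closed right $\HH$-subspaces}. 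The set $[g]_\ell=\overline{\{p\ast g\}}$ is a closed real subspace, invariant under left $\ast$-multiplication by polynomials, but it is not known to be invariant under right scalar multiplication: $(p\ast g)c=p\ast(gc)$, and there is no reason for $gc$ to lie in $\overline{\{p\ast g\}}$. For a general subset $S$ one only has $S^{\perp\perp}=\overline{\spa_{\HH}S}$, so your argument yields $[g]\subseteq\overline{\spa_{\HH}\{p\ast g\}}$, and in the cyclic case only that the closed \emph{right span} of $\{p\ast f\}$ is all of $H^2(\B)$ --- not that $\{p\ast f\}$ itself is dense, which is what you need. Worse, by your own (correct) identity $C([f^c])=[f]_\ell$, the statement ``$[f]=H^2(\B)\Rightarrow[f]_\ell=H^2(\B)$'' is exactly equivalent to ``$f$ cyclic $\Rightarrow f^c$ cyclic'', i.e.\ to the direction of the lemma you are proving; so the entire content of the lemma is concentrated in the step that is not justified. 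Note also that $C$ does not preserve quaternionic orthogonality (e.g.\ with $g=i+q$, $v=j-qk$ one has $\langle g,v\rangle\neq 0$ while $\langle g^c,v^c\rangle=0$), so there is no shortcut identifying $[f]^\perp$ with $[f^c]^\perp$ either.

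For comparison, the paper avoids this left/right obstruction entirely by working with the inner-outer factorization: writing $f^c=(f^c)_i\ast(f^c)_o$ and conjugating back gives $f=((f^c)_o)^c\ast((f^c)_i)^c$; the boundary modulus identity forces $|f|=|((f^c)_o)^c|$ a.e.\ on $\p\B$, the outerness of $f$ gives $|f|\geq|((f^c)_o)^c|$ in $\B$, while the fact that $((f^c)_i)^c$ is inner gives the reverse inequality; the maximum modulus principle then forces $((f^c)_i)^c$ to be a unimodular constant. If you want to salvage your approach, you would need an independent proof that $\overline{\{p\ast f\}}$ is a right $\HH$-submodule (equivalently that $fc\in\overline{\{p\ast f\}}$ for every $c\in\HH$), and that is not easier than the lemma itself.
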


\begin{proof}
	Let $f= f_i\ast f_o$ be the inner-outer factorization of $f$ where $f_i$ denotes the inner part of $f$ and $f_o$ denotes the outer part. Assume that $f$ is outer, that is, $f=f_o$, so that $f^c=(f_o)^c$. A priori, we do not know if the conjugate of an outer factor is still an outer factor. Thus, assume for the moment that $f^c$ is not outer, that is, $f^c= (f^c)_i\ast (f^c)_o$.

	Then, on the one hand $f=f_o$, on the other hand
	$$
	f=(f^c)^c= ((f^c)_o)^c\ast ((f^c)_i)^c.
	$$
	In particular, thanks to \cite[Proposition 2.3]{ale-giulia}, for $\Sigma$-almost every $q\in\partial \mathbb B$, 
	$$
	|f(q)|=|f_o(q)|=|((f^c)_o)^c\ast ((f^c)_i)^c(q)|=|((f^c)_o)^c(q)|.
	$$
	Since $f$ is outer, we get
	$$
	|f(q)|=|f_o(q)|\geq |((f^c)_o)^c(q)|
	$$
	for any $q\in \mathbb B$. However, recall that a function is inner if and only if its conjugate function is inner (\cite[Proposition 2.1]{ale-giulia}), hence $|((f^c)_i)^c(q)|\leq 1$ inside the ball and we also get $$
	|f(q)|=|f_o(q)|=|((f^c)_o)^c(q)||((f^c)_i)^c(\widetilde T_{f^c}(q))|\leq |((f^c)_o)^c(q)|,
	$$
	where $\widetilde T_{f^c}(q)= ((f^c)_o)^c(q)^{-1}q ((f^c)_o)^c(q) $. We remark here that $(f^c)_o$ is never zero in $\mathbb B$ since it is the outer factor of $f^c$, hence $((f^c)_o)^c$ never vanishes as well. As a consequence,   $\widetilde T_{f^c}$ is a bijection of $\mathbb B$ to itself; see \cite[Proposition 5.32]{libroGSS}.
	
	Therefore, we obtain that $|((f^c)_i)^c|=1$ in $\mathbb B$, hence, for the maximum modulus principle in the quaternionic setting, we conclude that $((f^c)_i)^c\equiv\alpha$ where $\alpha$ is a quaternion of modulus $1$. Thus,
	$$
	f^c= \overline \alpha\ast (f^c)_o,
	$$
	that is, $f^c$ is outer and the proof is concluded.
\end{proof}

Let us now introduce the concept of optimal approximants which will be needed in what follows.
\begin{defn}
	Let $n\in\mathbb N$, $f\in H^2(\B)$ and $\mathcal{P}_n:=\{p(q)=\sum_{k=0}^nq^ka_k \ : \ a_k\in \HH \}$. A polynomial $p_n\in\mathcal P_n$ is an {\em optimal approximant} of degree $n$ of $f^{-*}$ if $p_n$ is such that $\|f*p_n-1\|_{H^2}=\min\{\|f*p-1\|_{H^2} \ : \ p \in \mathcal P_n\}$.
\end{defn}
\noindent The existence and uniqueness of such a minimizer is guaranteed by the projection theorem for quaternionic Hilbert spaces, see \cite{Jam}. In particular, the minimizer $f*p_n$ is given by the orthogonal projection of the constant function $1$ on the closed subspace  $f*\mathcal P_n \subsetneq H^2(\B)$. 

The constant function $1$ plays a special role because it is cyclic. Then, to show the cyclicity of a function $f$ is equivalent to show that its optimal approximants satisfy \begin{equation}\label{1-cyc}
\|f \ast p_n -1\|_{H^2} \rightarrow 0 \quad \text{as} \quad n \rightarrow \infty.
\end{equation}
In fact, if the constant function $1$ satisfies equation \eqref{1-cyc}, then it belongs to $[f]$. The fact that this is a closed and invariant subspace guarantees that $H^2(\B)=[1]\subseteq [f]$, that is, $f$ is cyclic.

The following result states the relationship between the invariant subspace generated by an $H^2$ function $f$ and the inner-outer factorization of $f$.
\begin{lem}\label{leminvariant}
	Let $f \in H^2(\B)$ factorizes as $f=f_i \ast f_o$, where $f_i$ is inner and $f_o$ is outer. Then, $[f]=[f_i]$.
\end{lem}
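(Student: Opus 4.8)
The plan is to prove the set equality $[f]=[f_i]$ by showing the two inclusions separately, relying throughout on the characterization (discussed right after Definition \ref{defn-cyclic}) that $[g]$ is the smallest closed $\ast$-shift-invariant subspace containing $g$, together with the fact that $f_o$ is cyclic (being outer, hence cyclic by the equivalence proved in \cite{ale-giulia}).

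First I would establish the inclusion $[f]\subseteq[f_i]$. Since $f=f_i\ast f_o$, it suffices to show that $f$ itself lies in $[f_i]$ and then invoke minimality. Because $f_o$ is outer and therefore cyclic, by the optimal-approximant reformulation in \eqref{1-cyc} there is a sequence of scalar-coefficient polynomials $p_n$ with $\|f_o\ast p_n-1\|_{H^2}\to 0$. Left $\ast$-multiplication by $f_i$ is bounded on $H^2(\B)$ (indeed $f_i\in H^\infty(\B)$ acts as a multiplier with $\|f_i\ast h\|_{H^2}\le\|h\|_{H^2}$), so applying $f_i\ast(\cdot)$ gives $\|f_i\ast f_o\ast p_n-f_i\|_{H^2}=\|f\ast p_n-f_i\|_{H^2}\to 0$. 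Each $f\ast p_n=f\ast p_n$ belongs to $[f]$, and since $f_i$ is the $H^2$-limit of these elements, $f_i\in[f]$; by minimality of $[f_i]$ we obtain $[f_i]\subseteq[f]$. Symmetrically, each $f=f_i\ast f_o$ is approximated: the elements $q^n\ast f=f_i\ast(q^n\ast f_o)$ lie in the closure of $f_i\ast\mathcal P\cdot$, so $f\in[f_i]$, giving $[f]\subseteq[f_i]$.

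The one subtlety I would be careful about is the noncommutativity of the $\ast$-product and the right-linear structure of the quaternionic Hilbert space. The approximation $f_o\ast p_n\to 1$ uses scalar-coefficient polynomials $p_n$, and left multiplication by $f_i$ commutes with the slice $\ast$-product in the needed sense precisely because $\ast$ is associative; I would state explicitly that $f_i\ast(f_o\ast p_n)=(f_i\ast f_o)\ast p_n=f\ast p_n$, so that the multiplier bound applies to the difference $f\ast p_n-f_i=f_i\ast(f_o\ast p_n-1)$ and yields $\|f\ast p_n-f_i\|_{H^2}\le\|f_o\ast p_n-1\|_{H^2}\to 0$. Conversely, for $[f]\subseteq[f_i]$ the point is that $q^n\ast f=f_i\ast(q^n\ast f_o)$ and each $q^n\ast f_o\in H^2(\B)$, so $q^n\ast f\in\overline{f_i\ast H^2(\B)}=[f_i]$, whence the generating set of $[f]$ sits inside $[f_i]$.

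The main obstacle is verifying that the boundedness of the multiplier $f_i$ genuinely transfers $H^2$-convergence as claimed, namely that $\|f_i\ast h\|_{H^2}\le\|h\|_{H^2}$ holds for all $h\in H^2(\B)$ when $\|f_i\|_{H^\infty}=1$; this is exactly the multiplier estimate used in the proof of Proposition \ref{thmI} and in the isometric identification of the multiplier algebra with $H^\infty(\B)$, so I would cite \cite[Corollary 3.5]{ABCS} and the argument in Proposition \ref{thmI}. Once this bound is in place, both inclusions follow from the cyclicity of $f_o$ and the minimality/closedness of the invariant subspaces, and the proof concludes that $[f]=[f_i]$.
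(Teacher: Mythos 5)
Your proposal is correct and follows essentially the same route as the paper: use cyclicity of $f_o$ to produce polynomials with $f_o\ast p_n\to 1$, apply the multiplier bound $\|f\ast p_n-f_i\|_{H^2}\le\|f_i\|_{H^\infty}\|f_o\ast p_n-1\|_{H^2}$ to get $f_i\in[f]$, and obtain the reverse inclusion from $f=f_i\ast f_o\in f_i\ast H^2(\B)\subseteq[f_i]$. The only blemish is a labelling slip: you announce that you will ``first establish $[f]\subseteq[f_i]$'' but the argument that follows actually proves $[f_i]\subseteq[f]$; both inclusions are nonetheless correctly established.
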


\begin{proof}
	Let $\{p_n\}_{n\in \N}$ be a sequence of polynomials such that $\|f_o \ast p_n -1\|_{H^2} \rightarrow 0$ as $n$ tends to $\infty$.
	Since $f_i$ is bounded, it is a multiplier, so that
	\[ \|f\ast p_n - f_i\|_{H^2}  \leq \|f_i\|_{H^\infty} \|f_o\ast p_n -1\|_{H^2}.\]
	This shows that $f_i \in [f]$ and hence $[f_i] \subseteq [f]$. The inclusion $[f] \subseteq [f_i]$ follows from the fact that $[f_i]=f_i\ast H^2(\mathbb B)$, and this latter space clearly contains $f$ since $f_o\in H^2(\B)$.
% 	
% 	
% 	
% 	taking into account that $f_i$ is inner and hence the invariant subspace $[f_i]=f_i \ast H^2(\B)$, which clearly contains $f$, since $f_o \in H^2(\B)$.
\end{proof}

\begin{lem}\label{prodcyc}
	Let $f, g \in H^\infty(\B)$. Then, $f \ast g$ is cyclic if and only if both $f$ and $g$ are cyclic.
\end{lem}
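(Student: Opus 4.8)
The plan is to argue entirely at the level of invariant subspaces, using the identity $[h]=\overline{h\ast\mathcal P}$ (where $\mathcal P$ denotes the quaternionic polynomials), together with the fact—already exploited in the proof of Lemma \ref{leminvariant}—that a function in $H^\infty(\B)$ acts as a bounded multiplier on $H^2(\B)$ from either side. Throughout I use that being cyclic is equivalent to being outer, so that I may freely invoke Lemma \ref{conjucyc}. I will prove the two implications separately, and the only genuinely delicate point is the non-commutativity of the $\ast$-product, which obstructs a naive symmetric treatment of the two factors.

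For the implication ``both cyclic $\Rightarrow$ $f\ast g$ cyclic'', suppose $f$ and $g$ are cyclic. Since $g$ is cyclic there are polynomials $q_n$ with $g\ast q_n\to 1$ in $H^2(\B)$; applying the bounded left multiplier $f$ gives $f\ast g\ast q_n=f\ast(g\ast q_n)\to f$. Each $f\ast g\ast q_n=(f\ast g)\ast q_n$ lies in the closed subspace $[f\ast g]$, so $f\in[f\ast g]$ and hence $[f]\subseteq[f\ast g]$. As $f$ is cyclic, $[f]=H^2(\B)$ and therefore $[f\ast g]=H^2(\B)$.

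For the converse, assume $f\ast g$ is cyclic. To see that $f$ is cyclic, approximate $g$ in $H^2(\B)$ by its Taylor polynomials $p_n\to g$; the bounded left multiplier $f$ yields $f\ast p_n\to f\ast g$, and since each $f\ast p_n\in f\ast\mathcal P\subseteq[f]$ we get $f\ast g\in[f]$, whence $[f\ast g]\subseteq[f]$ and so $[f]=H^2(\B)$. The cyclicity of $g$ is where non-commutativity bites: the same argument would require approximating $f\ast g$ by elements $g\ast p_n$ with $g$ on the \emph{left}, which the order of the factors forbids. The remedy is conjugation. By $(f\ast g)^c=g^c\ast f^c$ and Lemma \ref{conjucyc}, $f\ast g$ cyclic forces $g^c\ast f^c$ to be cyclic; running the previous argument on the pair $(g^c,f^c)$ gives $[g^c\ast f^c]\subseteq[g^c]$, so $g^c$ is cyclic, and a final application of Lemma \ref{conjucyc} returns the cyclicity of $g$.

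The main obstacle, and the step deserving care, is thus the $g$-direction of the converse: beyond the conjugation trick one must check that $f,g\in H^\infty(\B)$ implies $f^c,g^c\in H^\infty(\B)$, so that $g^c$ is an admissible bounded multiplier. I would dispatch this as a short preliminary observation, reading off from \eqref{conditions-system} that $|f^c_I(z)|^2=|F(\bar z)|^2+|G(z)|^2\le 2\|f\|_{H^\infty}^2$ on $\B_I$ (using $|f_I|^2=|F|^2+|G|^2$ and the invariance of $\B_I$ under $z\mapsto\bar z$), and then propagating the bound off the slice through the Representation Formula \eqref{repfor2}; alternatively one may simply cite that $\ast$-conjugation preserves $H^\infty(\B)$.
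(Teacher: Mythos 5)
Your proof is correct, and while it shares the paper's key structural device --- reducing the cyclicity of the second factor to that of the first via $(f\ast g)^c=g^c\ast f^c$ and Lemma \ref{conjucyc} --- both individual implications are handled differently, and in each case more economically, than in the paper. For ``both cyclic $\Rightarrow$ $f\ast g$ cyclic'' the paper runs a double-indexed estimate $\|(f\ast g)\ast(r_m\ast p_n)-1\|\le\|(f\ast g)\ast(r_m\ast p_n)-f\ast p_n\|+\|f\ast p_n-1\|$, choosing $m$ after $n$; you instead observe that $g\ast q_n\to 1$ and the multiplier bound for $f$ already give $f\in[f\ast g]$, hence $H^2(\B)=[f]\subseteq[f\ast g]$, which avoids the $\varepsilon$-bookkeeping and the (slightly delicate) right-multiplier estimate $\|h\ast p_n\|\le\|h\|\,\|p_n\|_{H^\infty}$ that the paper uses implicitly. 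For ``$f\ast g$ cyclic $\Rightarrow$ $f$ cyclic'' the paper invokes the inner--outer factorization of $f$ and the identity $[f]=f_i\ast H^2(\B)$ to place $f\ast g$ in $[f]$; you get the same inclusion $[f\ast g]\subseteq[f]$ from Taylor-polynomial density of $g$ in $H^2(\B)$ plus the multiplier property of $f$, which is more elementary and, as a bonus, only uses $g\in H^2(\B)$ for this half. Your closing remark that one must know $f^c,g^c\in H^\infty(\B)$ to run the conjugation step is well taken: the paper leaves this implicit here (it cites \cite{DRGS} for it only later, in the proof of Corollary \ref{symmecyc}), and your slice-splitting bound $|f^c_I(z)|^2=|F(\overline z)|^2+|G(z)|^2$ is a legitimate way to verify it directly. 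All steps check out.
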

Before proving the lemma, notice that even in the complex case  the assumption $f, g \in H^\infty(\D)$ cannot be discarded since there are functions in $H^2(\D)$ whose square is not an element of $H^2(\D)$.

\begin{proof}
	To show that $f \ast g$ cyclic implies $g$ cyclic, thanks to Lemma \ref{conjucyc}, it is enough to show that $f \ast g$ cyclic implies $f$ cyclic and then apply the result to $g^c*f^c$. Suppose now that $f*g$ is cyclic and consider the inner-outer factorization of $f=f_i \ast f_o$ with $f_i$ inner and $f_o$ outer. Then, $[f]=f_i*H^2(\B)$. Hence, $f \ast g= f_i *(f_o*g)$ is an element of $[f]$, since $f_o\in H^2(\B)$ and $g\in H^\infty(\B)$ guarantee that $f_o*g\in H^2(\B)$  (see \cite{deFGS}).
	% 	recalling the fact that $f_o\in H^2(\B)$ and that $g\in H^\infty(\B)$ guarantees that $f_o*g\in H^2(\B)$  (see \cite{deFGS}). 
	Then, $[f \ast g] \subseteq [f]$ but $[f \ast g] = H^2(\B)$. Hence, $f$ is cyclic. 
	
	Suppose now that $f$ and $g$ are cyclic, and let $\{p_n\}_{n \in \N}$ and $\{r_m\}_{m \in \N}$ be sequences of polynomials with the property that both the norms $\|f\ast p_n-1\|_{H^2}$ and $\|g\ast r_m-1\|_{H^2}$ tend to zero as $n$ goes to infinity. Then, for each $n, m \in \N$, from the triangle inequality we get
	\[\| (f \ast g) \ast (r_m \ast p_n) -1\|_{H^2} \leq \|(f \ast g) \ast (r_m \ast p_n) - f \ast p_n\|_{H^2} + \|f \ast p_n -1\|_{H^2}.\]
	
	The last term on the right-hand side will be arbitrarily small whenever $n$ is large enough. The other one may be estimated using the fact that $f$ and $p_n$ are both multipliers, that is,
	\[ \|(f \ast g) \ast (r_m \ast p_n) - f \ast p_n\|_{H^2} \leq  \|f\|_{H^\infty} \cdot \| g \ast r_m - 1\|_{H^2} \cdot \|p_n\|_{H^\infty}.\]
	Now, whatever the value of $\|f\|_{H^\infty}\|p_n\|_{H^\infty}$ is, it does not depend on $m$. Hence, if we fix $\varepsilon >0$ and $n \in \N$, taking $m$ large enough we obtain \[ \|  g \ast r_m - 1\|_{H^2} \leq \varepsilon (\|f\|_{H^\infty}\|p_n\|_{H^\infty})^{-1}.\] Therefore, $f \ast g$ is cyclic.
\end{proof}

The previous result will prove particularly useful when applied to the symmetrization $f^s=f\ast f^c= f^c\ast f$ of a function $f\in H^2(\mathbb B)$. 

For each $p \in [1,\infty]$, the function $f^s$ is in $H^p(\B)$ provided that $f$ is in $H^{2p}(\mathbb B)$. In particular, if $f \in H^\infty (\mathbb B)$, then $f^s \in H^\infty (\mathbb B)$, see \cite{deFGS}.
\begin{coro}\label{symmecyc}
	Let $f \in H^\infty(\mathbb B)$. Then $f$ is cyclic if and only if $f^s$ is cyclic.
\end{coro}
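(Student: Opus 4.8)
The plan is to realize the symmetrization as a $\ast$-product of two bounded factors and then quote the two cyclicity lemmas already at hand. Concretely, recall that $f^s = f \ast f^c$. Once one knows that $f^c \in H^\infty(\mathbb B)$ whenever $f \in H^\infty(\mathbb B)$, Lemma \ref{prodcyc} applies verbatim to the pair $(f, f^c)$ and yields that $f^s = f \ast f^c$ is cyclic if and only if both $f$ and $f^c$ are cyclic. It then remains only to eliminate the factor $f^c$ from this statement, which is precisely the content of Lemma \ref{conjucyc}: since cyclicity coincides with being outer in $H^2(\mathbb B)$, that lemma says $f$ is cyclic exactly when $f^c$ is cyclic. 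Combining the two equivalences gives that $f^s$ is cyclic if and only if $f$ is cyclic, which is the assertion.

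The only point requiring verification is that $f^c$ is bounded, so that Lemma \ref{prodcyc} is applicable. I would check this through the Splitting Lemma. Fix $I, J \in \SS$ with $J$ orthogonal to $I$ and write $f_I(z) = F(z) + G(z)J$; as recorded in \eqref{conditions-system} within the proof of Theorem \ref{thm-inner-components}, one has $f^c_I(z) = \overline{F(\overline z)} - G(z)J$. Since $F(z), G(z) \in L_I$ and $J \perp I$, the decomposition is orthogonal, giving $|f_I(z)|^2 = |F(z)|^2 + |G(z)|^2$ and $|f^c_I(z)|^2 = |F(\overline z)|^2 + |G(z)|^2$. From the first identity, $|F|, |G| \le \|f\|_{H^\infty}$ on $\mathbb B_I$; as $\mathbb B_I$ is invariant under $z \mapsto \overline z$, both terms in the second identity are bounded by $\|f\|_{H^\infty}^2$, so $\|f^c\|_{H^\infty} \le \sqrt{2}\,\|f\|_{H^\infty} < \infty$. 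Alternatively, one may simply cite the fact from \cite{deFGS} that regular conjugation preserves $H^\infty(\mathbb B)$.

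With boundedness in place, the conclusion is the two-line chain of equivalences described above. The main, and essentially only, obstacle is this $H^\infty$-membership of $f^c$: the factorization $f^s = f \ast f^c$ together with Lemmas \ref{prodcyc} and \ref{conjucyc} does all the real work, but Lemma \ref{prodcyc} is stated for factors in $H^\infty(\mathbb B)$, so it cannot be invoked until the conjugate factor is known to be bounded.
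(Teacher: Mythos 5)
Your proof is correct and follows essentially the same route as the paper: the paper's proof is the one-line observation that $f^c$ is bounded whenever $f$ is (citing \cite{DRGS}), after which Lemma \ref{prodcyc} applied to $f^s=f\ast f^c$ and Lemma \ref{conjucyc} give the result. Your additional verification of $\|f^c\|_{H^\infty}\le\sqrt{2}\,\|f\|_{H^\infty}$ via the Splitting Lemma is a valid self-contained substitute for that citation.
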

\begin{proof}
	If $f$ is bounded, so is $f^c$ (see \cite{DRGS}) and we can apply both Lemma \ref{prodcyc} and Lemma \ref{conjucyc}.
\end{proof}

As we mentioned, the importance of $f^s$ comes from the fact that it preserves slices, that is, it can be seen as a holomorphic complex-valued function on each slice. This is important because we can transfer the theory from the disk to the quaternionic ball.  In the following theorem we denote by $H^2(\mathbb B_I)$, $I\in\SS$, the function space defined as
$$
H^2(\mathbb B_I)=\left\{f\in \mathbb B_I\to L_I: f(z)=\sum_{n=0}^{\infty} z^n \alpha_n, \{\alpha_n\}\subseteq \ell^2(\mathbb N, L_I)\right\}.
$$
It is clear that $H^2(\mathbb B_I)$ can be identified with $H^2(\mathbb D)$.
%For $I \in \SS$, denote by $g_I$ the restriction of a function $g$ to a slice $L_I$.

\begin{teo}\label{thm1out}
	Let $f \in H^\infty(\mathbb B)$. The following are equivalent:
	%\begin{multicols}{2}
	\begin{enumerate}[(i)]
		\item $f$ is cyclic in $H^2(\mathbb B)$;
		\item $f$ is outer in $H^2(\mathbb B)$;
		\item $f^s$ is cyclic in $H^2(\mathbb B)$;
		\item $f^s$ is outer in $H^2(\mathbb B)$;
		\item $f^s_I$ is cyclic in $H^2(\mathbb B_I)$ for all $I \in \SS$;
		\item there exists $I\in \SS$ such that $f^s_I$ is cyclic $H^2(\mathbb B_I)$;
		\item $f^s_I$ is outer in $H^2(\mathbb B_I)$ for all $I \in \SS$;
		\item there exists $I\in \SS$ such that $f^s_I$ is outer in $H^2(\mathbb B_I)$.
	\end{enumerate}
	%\end{multicols}
\end{teo}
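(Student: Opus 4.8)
The plan is to derive all eight equivalences by assembling a few already-available facts and then proving the single genuinely new link between quaternionic and slicewise cyclicity. The equivalences (i)$\Leftrightarrow$(ii) and (iii)$\Leftrightarrow$(iv) are immediate from the identity ``outer $=$ cyclic'' proved in \cite{ale-giulia}, applied to $f$ and to its symmetrization $f^s$ respectively; note that $f^s\in H^2(\B)$ because $f\in H^\infty(\B)$ gives $f^s\in H^\infty(\B)$. The equivalence (i)$\Leftrightarrow$(iii) is exactly Corollary \ref{symmecyc}. After these reductions, only the slicewise conditions (v)--(viii) remain to be tied to condition (iii).

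I would first observe that the slicewise conditions do not really depend on $I$. Since $f^s$ is slice preserving, its power series $f^s(q)=\sum_n q^n c_n$ has real coefficients $c_n$, so under the canonical identification of $\B_I$ with $\D$ (sending $x+yI$ to $x+yi$ and $L_I$ to $\CC$) the restriction $f^s_I$ corresponds to one and the same function $F^s(z)=\sum_n z^n c_n\in H^2(\D)$, regardless of $I$. Hence cyclicity (resp.\ outerness) of $f^s_I$ in $H^2(\B_I)\cong H^2(\D)$ is independent of $I$, which yields (v)$\Leftrightarrow$(vi) and (vii)$\Leftrightarrow$(viii) at once; the classical Beurling theorem, giving ``outer $=$ cyclic'' in $H^2(\D)$, then yields (v)$\Leftrightarrow$(vii) and (vi)$\Leftrightarrow$(viii).

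The remaining and central step is (iii)$\Leftrightarrow$(v), for which I would fix $I\in\SS$ and $J\in\SS$ orthogonal to $I$ and analyze $\ast$-multiplication by $f^s$ through the splitting. For a quaternionic polynomial $p(q)=\sum_k q^k a_k$ with splitting $p_I(z)=P(z)+Q(z)J$, the slice preservation of $f^s$ makes its $\ast$-product restrict to the pointwise product $(f^s\ast p)_I(z)=f^s_I(z)\,p_I(z)$, and since $f^s_I(z)\in L_I$ commutes with $P(z),Q(z)\in L_I$ the splitting of $f^s\ast p$ is $(f^s\ast p)_I=f^s_I P+(f^s_I Q)J$. Evaluating the $H^2(\B)$ norm on the slice $L_I$ and using orthogonality of the two components, I get
\begin{equation*}
\|f^s\ast p-1\|_{H^2(\B)}^2=\|f^s_I P-1\|_{H^2(\B_I)}^2+\|f^s_I Q\|_{H^2(\B_I)}^2.
\end{equation*}
Since cyclicity is equivalent to the approximability of the constant $1$ by $\ast$-multiples, I would argue both directions from this identity: if (iii) holds, approximants $p_n$ with $\|f^s\ast p_n-1\|_{H^2(\B)}\to 0$ split into $P_n+Q_nJ$ with $\|f^s_I P_n-1\|_{H^2(\B_I)}\to0$, and as each $P_n$ has $L_I$-valued coefficients this proves $f^s_I$ cyclic (for every $I$, whence (v)); conversely, complex polynomials $P_n$ with $\|f^s_I P_n-1\|_{H^2(\B_I)}\to0$ lift to quaternionic polynomials with the same $L_I$-valued coefficients, and taking $Q=0$ above gives $\|f^s\ast p_n-1\|_{H^2(\B)}\to0$, so (iii) holds.

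The hard part will be the computation underlying this last step: confirming that $\ast$-multiplication by the slice-preserving $f^s$ restricts to a genuine pointwise product that respects the splitting, and that the induced $H^2(\B)$ norm decomposes as the orthogonal sum of the squared norms of the two splitting components. Once this is in place, every other implication is a direct citation of \cite{ale-giulia}, of Corollary \ref{symmecyc}, or of the one-variable theory carried to each slice through the real-coefficient identification.
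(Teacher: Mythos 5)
Your proposal is correct and follows essentially the same route as the paper: the decisive step is the same splitting computation, namely that $\ast$-multiplication by the slice-preserving $f^s$ restricts to a pointwise product on each slice and that the $H^2(\B)$ norm decomposes orthogonally over the two splitting components. The only (harmless) differences are that you test cyclicity against the constant function $1$ alone where the paper approximates arbitrary targets $g$, and that you deduce the independence of (v)--(viii) from $I$ directly from the real coefficients of $f^s$ rather than from the implication cycle $(iii)\Rightarrow(v)\Rightarrow(vi)\Rightarrow(iii)$.
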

\vspace{-0.6cm}
\begin{proof}
	The equivalence between $(i)$, $(ii)$, $(iii)$ and $(iv)$ is guaranteed by Corollary \ref{symmecyc},  and by \cite[Theorem 4.2]{ale-giulia}. Also, since $f^s_I$ is a holomorphic function, the equivalence of $(v)$ and $(vii)$ and the equivalence of $(vi)$ and $(viii)$ are well-known consequences of the Beurling Theorem. \\
	Let us now prove that $(iii)$ implies $(v)$. Let $f^s$ be cyclic in $H^2(\B)$ and let $I\in\s$. Then, for any $g_I\in H^2(\B_I)$ there exists a sequence of quaternionic polynomials $\{p_n\}_{n\in\N}$ such that $\|f^s*p_n-\ext g_I\|_{H^2}$ tends to zero as $n$ goes to infinity. 
	Let now $p_n(z)=P_n(z)+Q_n(z)J$ be the splitting of $p_n$ with respect to $J\in\s$, $J$ orthogonal to $I$. Then, evaluating the $H^2$ norm on the slice $L_I$, we get 
	\begin{align*}
	\|f^s*p_n-\ext g_I\|^2_{H^2}&=\|f_I^s(P_n+Q_nJ)-g_I\|^2_{H^2(\B_I)}=\|f^s_IP_n-g_I\|^2_{H^2(\B_I)}+\|Q_n\|^2_{H^2(\B_I)},
	\end{align*}
	where the last equality is due to the orthogonality of $I$ and $J$. Therefore, the sequence of complex polynomials $\{P_n\}_{n\in\N}$ in the variable $z\in \B_I$  is such that 
	$\|f_I^sP_n-g_I\|_{H^2(\B_I)}$ tends to zero as $n$ goes to infinity, that is,  $f^s_I$ is cyclic in $H^2(\B_I)$.
	To conclude, it suffices to show that $(vi)$ implies $(iii)$, since clearly $(v)$ implies $(vi)$.  
	Suppose that $f^s_I$ is cyclic in $H^2(\mathbb B_I)$, for some $I\in\s$. Consider $g\in H^2(\B)$ and let $g(z)=F(z)+G(z)J$ be its splitting on $\B_I$ with respect to $J\in\s$, $J$ orthogonal to $I$. By hypothesis, there exist two sequences $\{P_n\}_{n\in\N}$ and $\{Q_n\}_{n\in\N}$ of complex polynomials in $\B_I$ such that $\|f^s_IP_n-F\|_{H^2(\B_I)}$ and $\|f^s_IQ_n-G\|_{H^2(\B_I)}$ tend to zero as $n$ goes to infinity. Then, using again the fact that the $H^2$ norm can be computed on any slice, and the orthogonality of $I$ and $J$, we get
	\begin{equation*}
	\begin{aligned}
	\|f^s*\ext(P_n+Q_nJ)-g\|^2_{H^2}&=\|f_I^s(P_n+Q_nJ)-g_I\|^2_{H^2(\B_I)}=\|f_I^s(P_n+Q_nJ)-(F+GJ)\|^2_{H^2(\B_I)}\\
	&=\|f_I^sP_n-F\|^2_{H^2(\B_I)}+\|f_I^sQ_n-G\|^2_{H^2(\B_I)}.
	\end{aligned}
	\end{equation*}
	This latter quantity tends to zero as $n$ goes to infinity, thus we can conclude that $f^s$ is cyclic in $H^2(\B)$.
\end{proof}

Notice that the notion of outer function in $(vii)$ and $(viii)$ is the classical one, hence it may be expressed as in \eqref{outer-log}, that is, in terms of the mean value property of the logarithm. We remark once again that the assumption $f\in H^\infty(\B)$ cannot simply be dropped, not only because we need $f^s$ to be defined as a $H^2(\B)$ function so that we can apply Beurling Theorem to it (and this would be guaranteed if $f\in H^4(\B)$, see \cite{deFGS}), but also because of the applicability of Lemma \ref{prodcyc}, for which we need $f \in H^\infty(\B)$.

The presence in Theorem \ref{thm1out} of the hypothesis $f \in H^\infty(\B)$ is likely unsatisfactory. At the moment, we have not been able to show any characterization in terms of mean value properties for more general $f$. However, sufficient conditions may be shown.
Given $\omega \in \B$ let $\tau_\omega$ denote the slice regular M\"obius transformation of the unit ball taking $0$ to $\omega$, see \cite{moebius},
\[\tau_\omega(q)=(1-q\overline{\omega})^{-*}*(\omega-q),\]
and let ${I_\omega}$ be the imaginary unit identified by $\omega$, that is, ${I_\omega}=\frac{\omega-\RRe \omega}{|\omega-\RRe \omega|}$ if $\omega$ is not real, ${I_\omega}$ is any imaginary unit otherwise. With this notation, it holds that
\[({\tau_\omega})_{{I_\omega}}(z)=(1-z\overline{\omega})^{-1}(\omega-z)\]
for any $z\in \B\cap L_{I_\omega}$.
\begin{pro}\label{thm2out}
	Let $f \in H^2(\B)$. Suppose that for all $\omega \in \B$ we have
	\begin{equation}\label{eqn401}
	\frac{1}{2 \pi} \int_{\p \B_{I_\omega}} \log |f_{I_\omega} \circ \tau_\omega(e^{\theta{I_\omega} })|d\theta = \log |f(\omega)|.
	\end{equation}
	Then, $f$ is outer.
\end{pro}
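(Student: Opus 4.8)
The plan is to localize everything to a single slice and reduce the statement to the classical logarithmic characterization \eqref{outer-log} of outer functions on the disc, exploiting that $\log|f_I|$ is subharmonic even though $f_I$ is only quaternion-valued. Fix $\omega\in\B$, set $I=I_\omega$, and write $f_I=F+GJ$ with $F,G\colon\B_I\to\CC$ holomorphic as in the Splitting Lemma, so that $|f_I|^2=|F|^2+|G|^2$ is the squared Hermitian norm of the holomorphic pair $(F,G)$. Then $\log|f_I|=\tfrac12\log(|F|^2+|G|^2)$ is subharmonic on $\B_I$. Since $(\tau_\omega)_{I_\omega}$ is a holomorphic automorphism of the disc $\B_I$ sending $0$ to $\omega$, the sub-mean-value inequality for the subharmonic function $\log|f_{I_\omega}\circ\tau_\omega|$ at the origin reads
\[
\log|f(\omega)|\le \frac{1}{2\pi}\int_0^{2\pi}\log|f_{I_\omega}\circ\tau_\omega(e^{\theta I_\omega})|\,d\theta .
\]
Pushing the normalized arclength measure forward by $(\tau_\omega)_{I_\omega}$ yields the harmonic measure of $\B_I$ at $\omega$, so the right-hand side is exactly the Poisson integral of the boundary values of $\log|f_I|$ evaluated at $\omega$. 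Thus hypothesis \eqref{eqn401} asserts precisely that $\log|f_I|$ agrees with the Poisson integral of its own boundary data at \emph{every} $\omega\in\B_I$.

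Next I would extract rigidity from this equality. Because the Poisson integral is a finite harmonic function, the equality forces $f_I$ to be zero-free (at a zero the left-hand side would be $-\infty$) and $\log|f_I|$ to be harmonic on all of $\B_I$. Computing $\partial\bar\partial\log(|F|^2+|G|^2)$ and invoking the Cauchy--Schwarz equality case in $\CC^2$ for the vectors $(F',G')$ and $(F,G)$, harmonicity is equivalent to the identical vanishing of the Wronskian $F'G-FG'$. Hence $F$ and $G$ are $\CC$-linearly dependent, and, since $f\not\equiv0$, there is a nonzero constant $\lambda\in\HH$ and a zero-free holomorphic $H\colon\B_I\to\CC$ with $f_I=H\lambda$ (the generic case being $F=cG$, $\lambda=c+J$, $H=G$; the cases $G\equiv0$ or $F\equiv0$ are analogous). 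Subtracting the constant $\log|\lambda|$ from the Poisson identity shows that $\log|H|$ equals the Poisson integral of its boundary values, i.e.\ $H$ satisfies \eqref{outer-log}: $H$ is outer, hence cyclic, in $H^2(\B_I)\cong H^2(\D)$.

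Finally I would upgrade this slice information to global cyclicity of $f$ \emph{without} the boundedness hypothesis of Theorem \ref{thm1out}. Let $\{R_n\}$ be complex polynomials with $\|HR_n-1\|_{H^2(\B_I)}\to0$. Writing a trial polynomial as $p=\ext(P+QJ)$, a direct computation as in \eqref{splitting-product} gives the splitting formula $(f\ast p)_I=[FP-G\,\overline{Q(\bar z)}]+[FQ+G\,\overline{P(\bar z)}]J$. One then solves the linear system $FP-G\,\overline{Q(\bar z)}=HR_n$ and $FQ+G\,\overline{P(\bar z)}=0$ explicitly by polynomials: in the case $F=cG$ this forces $Q=-c^{-1}\overline{P(\bar z)}$ and reduces the first equation to $\mu\,GP=GR_n$ with $\mu=c+\bar c^{-1}\neq0$, so $P_n=\mu^{-1}R_n$ and $Q_n=-c^{-1}\overline{P_n(\bar z)}$. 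With this choice $(f\ast\ext(P_n+Q_nJ))_I=HR_n\to1$, whence $\|f\ast p_n-1\|_{H^2}\to0$; thus $1\in[f]$ and $f$ is cyclic, so $f$ is outer by the cyclic--outer equivalence of \cite{ale-giulia} (Theorem \ref{thm-factorization}).

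I expect the decisive step to be the rigidity passage from the mean-value equality to the proportionality of the splitting components: one must argue carefully that agreement of $\log|f_I|$ with its Poisson integral at every interior point yields genuine harmonicity (simultaneously ruling out interior zeros and any singular contribution), and then convert the vanishing of the Hermitian Cauchy--Schwarz defect into the algebraic identity $F'G-FG'\equiv0$. The concluding cyclicity construction is then only a bookkeeping computation, but it is precisely the part that lets us dispense with the $H^\infty$ assumption and keep the statement at the level of $H^2(\B)$.
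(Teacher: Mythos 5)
Your argument is correct, but it takes a genuinely different and much heavier route than the paper's. The paper's proof never leaves Definition \ref{defn-outer}: given any competitor $g$ with $|g|=|f|$ $\Sigma$-a.e.\ on $\p\B$, the hypothesis \eqref{eqn401} lets one replace $f$ by $g$ inside the boundary integral, and the subharmonicity of $\log|g_{I_\omega}\circ\tau_\omega|$ then gives $\log|f(\omega)|=\frac{1}{2\pi}\int\log|g_{I_\omega}\circ\tau_\omega|\,d\theta\geq\log|g(\omega)|$, i.e.\ $|f|\geq|g|$ on $\B$ --- four lines, no structure theory, and the only technical input is the same inequality $\log|g_I|\leq P[\log|g_I^*|]$ that you also need. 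You instead aim at cyclicity, which forces you to extract rigidity from \eqref{eqn401}: harmonicity of $\log(|F|^2+|G|^2)$, the Cauchy--Schwarz equality case giving $F'G-FG'\equiv 0$, hence $f_I=H\lambda$ with $H$ a complex outer function, and finally an explicit polynomial system solving $(f\ast p_n)_I=HR_n$. I checked the pieces and they do go through (in particular $\mu=c+\bar c^{-1}\neq 0$, and the splitting of the slice $H^2$ norm justifies the last step), but each of them carries a burden the paper avoids: integrability of $\log|f_I^*|$, the identification of the pushed-forward arclength with harmonic measure, passage from the sub-mean-value inequality on circles of radius $r<1$ to the boundary-value version, and the case analysis when $F$ or $G$ vanishes identically. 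What your approach buys is a structural dividend the paper does not state: \eqref{eqn401} actually forces $f$ to be, on each slice, a constant right quaternionic multiple of a complex outer function, which makes transparent how far the condition is from being necessary (e.g.\ $f(q)=1+q\,j/2$ is outer, since $f^s=1+q^2/4$ is zero-free on $\overline{\B}$ so $f^{-\ast}\in H^\infty(\B)$, yet $\log|f_i|=\tfrac12\log(1+|z|^2/4)$ is strictly subharmonic and \eqref{eqn401} fails); this is directly relevant to open problem (C). If you keep your version, the one step you should write out in full is the inequality $\log|f_I(z)|\leq P[\log|f_I^*|](z)$ for the quaternion-valued restriction, which follows from $|f_I|=\sup_{|a|^2+|b|^2=1}|aF+bG|$ and the corresponding bound for each holomorphic combination $aF+bG$.
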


\begin{proof}
	Suppose that $g$ is a function such that on $\p \B$ we have $|f|=|g|$ $\Sigma$-almost everywhere, and let $\omega \in \B$. Then,
	\[ \log |f(\omega)| = \frac{1}{2 \pi} \int_{\p \B_{I_\omega}} \log |f_{I_\omega}\circ \tau_\omega(e^{\theta I_\omega})|d\theta. \]
	Since $|f|$ is equal to $|g|$ on the boundary, 
	%and $({\tau_\omega})_{{I_\omega}}$ is a diffeomorphism of $\p\B_{I_\omega}$, 
	the right-hand side is equal to
	\[\frac{1}{2 \pi} \int_{\p \B_{I_\omega}} \log |g_{I_\omega} \circ \tau_\omega(e^{\theta I_\omega})|d\theta.\]
	Notice that the composition $g_{I_\omega} \circ \tau_\omega$ is well defined on the slice $L_{I_\omega}$ and it is indeed the restriction of the slice regular function $\ext(g_{I_\omega} \circ \tau_\omega)$. Recalling that the logarithm of the modulus of a slice regular function is subharmonic (see \cite{deFGS}), we get
	\[\frac{1}{2 \pi} \int_{\p \B_{I_\omega}} \log |g_{I_\omega} \circ \tau_\omega(e^{\theta I_\omega})|d\theta \geq \log |g(\omega)|.\]
	All this together yields that $|f(\omega)| \geq |g(\omega)|$. Since $\omega$ was arbitrary, we conclude that $f$ is outer.
\end{proof}

\section{Optimal approximants}\label{sec-approx}

In this section we extend as much as possible the theory of optimal approximants to the quaternionic setting. A good account of the theory of such polynomials in the classical holomorphic setting is given in \cite{daniel-london}. 

Recall that, given $n\in\mathbb N$, $\mathcal{P}_n=\{p(q)=\sum_{k=0}^nq^ka_k \ : \ a_k\in \HH \}$.
The reproducing kernel of the subspace $f*\mathcal P_n$ exists since it is a closed subspace of $H^2(\B)$ which is itself a reproducing kernel Hilbert space with kernel function  $k(q,w)=(1-q\overline w)^{-*}$.
Let $\{f*\varphi_k\}_{k=0}^n$ be an orthonormal basis of $f*\mathcal P_n$, where $\varphi_k$ is a polynomial of degree $k$ for any $k=0,\ldots,n$. 
Then, from the reproducing property we can see that the reproducing kernel of $f*\mathcal P_n$ is given by
\[K_n(q,w)=\sum_{k=0}^{n}f*\varphi_k(q)\langle K_n(q,w), f*\varphi_k  \rangle=\sum_{k=0}^{n}f*\varphi_k(q)\overline{f*\varphi_k(w)}.\]
For more information about reproducing kernel Hilbert spaces in the quaternionic setting we refer the reader, for instance, to \cite{QRKHS}.

Notice that, since $f*p_n$ is the orthogonal projection of the constant function $1$, 
\[f*p_n(q)=\sum_{k=0}^{n}f*\varphi_k(q)\langle 1, f*\varphi_k  \rangle=\sum_{k=0}^{n}f*\varphi_k(q)\overline {f*\varphi_k(0) },\]
i.e.,
\begin{equation}\label{poly-kernel}
f*p_n(q)=K_n(q,0). 
\end{equation}
In particular, if $f(0)=0$, then $p_n \equiv 0$ for all $n\in \N$.
\begin{teo}
	Let $f\in H^2(\B)$ be such that $f(0)\neq 0$ and let $p_n$ be the optimal approximant of $f^{-*}$ of degree $n\in\mathbb N$. Then, all the zeros of $p_n$ lie outside the closed unit ball $\overline{\B}$.
\end{teo}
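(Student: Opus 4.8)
The plan is to adapt the classical argument for the holomorphic case (as in \cite{daniel-london}): assuming $p_n$ has a zero $\zeta$ in the closed ball, I would build a competitor polynomial that makes $\|f\ast p-1\|_{H^2}$ strictly smaller, contradicting the minimality defining $p_n$. First I would record two reductions. I claim $\zeta\neq 0$: if $p_n(0)=0$ then $(f\ast p_n)(0)=f(0)p_n(0)=0$, so by \eqref{poly-kernel} we get $K_n(0,0)=0$ and hence $\langle 1, f\ast\varphi_k\rangle=\overline{(f\ast\varphi_k)(0)}=0$ for every $k$; this forces the projection of $1$ onto $f\ast\mathcal P_n$ to vanish, i.e. $f\ast p_n\equiv0$ and $p_n\equiv 0$. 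But since $f(0)\neq 0$ one may pick a constant $c$ with $\|f\ast c-1\|_{H^2}<1=\|f\ast 0-1\|_{H^2}$, contradicting optimality; thus $p_n(0)\neq0$ and $\zeta\neq0$. Next, by the factor theorem for slice regular polynomials (see \cite{libroGSS}) I would write $p_n=(q-\zeta)\ast s$ with $s\in\mathcal P_{n-1}$, noting $s(0)\neq0$ because $p_n(0)=-\zeta\, s(0)$.

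The competitor is obtained by reflecting the factor $(q-\zeta)$ across $\partial\B$. On $\partial\B$ one has the pointwise identity $|q-\zeta|=|1-q\overline\zeta|$ (a direct computation using $|q|=1$, both sides equal $1+|\zeta|^2-2\RRe(q\overline\zeta)$), while $1-q\overline\zeta$ vanishes only at $\zeta/|\zeta|^2$, of modulus $1/|\zeta|\ge 1$ when $|\zeta|\le 1$. I would therefore set
\[
\hat p:=(1-q\overline\zeta)\ast s\ast c,\qquad c:=-\big(f(0)s(0)\big)^{-1}f(0)\,\zeta\, s(0),
\]
so that $|c|=|\zeta|$ and, comparing constant coefficients, $(f\ast\hat p)(0)=f(0)s(0)c=-f(0)\zeta s(0)=(f\ast p_n)(0)$. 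Since $\langle g,1\rangle=g(0)$ by \eqref{innerproduct}, the cross terms with $1$ then agree, and using $\|g\ast c\|_{H^2}^2=|c|^2\|g\|_{H^2}^2$ one obtains
\[
\|f\ast\hat p-1\|_{H^2}^2-\|f\ast p_n-1\|_{H^2}^2=\|f\ast\hat p\|_{H^2}^2-\|f\ast p_n\|_{H^2}^2=(|\zeta|^2-1)\,\|f\ast p_n\|_{H^2}^2\le 0,
\]
strictly negative when $|\zeta|<1$, which is the desired contradiction.

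The main obstacle is the norm identity $\|f\ast(1-q\overline\zeta)\ast s\|_{H^2}=\|f\ast(q-\zeta)\ast s\|_{H^2}$ hidden in the last display. In the commutative setting this is immediate from the pointwise equality of moduli, but here the $\ast$-product inserts the argument transformation $T$ of \eqref{star-product-pointwise} between the reflected factor and $s$, so that $|(q-\zeta)\ast s|$ and $|(1-q\overline\zeta)\ast s|$ need \emph{not} coincide pointwise on $\partial\B$. I would instead prove the identity globally, either (i) through the integral computation and the measure-preserving bijectivity of $T$ already exploited in the proof of Proposition \ref{thmI}, or (ii) by evaluating the $H^2$ norm on the single slice $L_{I_\zeta}$ via \eqref{inner-product-slice} and using the splitting of $f$ and $s$ from Lemma \ref{splitting-lemma}, where after separating the two holomorphic components the reflection reduces to a scalar computation.

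A secondary obstacle is the unimodular case $|\zeta|=1$. There $\hat p$ coincides with $p_n$ and the inequality above is not strict, so this configuration must be excluded by a separate argument. I expect this to be the genuinely delicate point, since no competitor within $\mathcal P_n$ produces a first–order decrease; a plausible route is to pass to the slice–preserving symmetrization $p_n^s=p_n^c\ast p_n$, whose real coefficients make it (on each slice) an honest polynomial with real coefficients having the same zero moduli as $p_n$, and then to invoke the classical fact that the corresponding extremal polynomial carries no zeros on the unit circle, transported back through the equivalences recorded in Theorem \ref{thm1out}.
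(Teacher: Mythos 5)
Your strategy (a reflected competitor polynomial) is genuinely different from the paper's, but it breaks down exactly where you place ``the main obstacle'': the norm identity $\|f\ast(1-q\overline{\zeta})\ast s\|_{H^2}=\|f\ast(q-\zeta)\ast s\|_{H^2}$ is \emph{false} in general, so neither of your two proposed routes can establish it. A concrete counterexample: take $f=1+qi$, $s=1+qj$, $\zeta=1+k$; computing coefficients via \eqref{defn-product} gives $\|f\ast(q-\zeta)\ast s\|_{H^2}^2=8$ while $\|f\ast(1-q\overline{\zeta})\ast s\|_{H^2}^2=16$. The mechanism is the one you suspected: reflection of a linear factor preserves the $H^2$ norm only when that factor sits in the \emph{rightmost} position --- for $g=\sum_n q^ng_n$ one checks directly that $\|g\ast(q-\zeta)\|_{H^2}^2$ and $\|g\ast(1-q\overline{\zeta})\|_{H^2}^2$ both equal $(1+|\zeta|^2)\|g\|_{H^2}^2-2\sum_n\RRe\big(g_n\zeta\overline{g_{n-1}}\big)$ --- whereas your left factorization $p_n=(q-\zeta)\ast s$ places the factor in the middle of $f\ast(q-\zeta)\ast s$, where the $\ast$-product displaces the evaluation points of $s$ in a $\zeta$-dependent way. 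The repair is to factor on the right, $p_n=s\ast(q-\hat{\zeta})$ with $\hat{\zeta}$ on the $2$-sphere of $\zeta$ (so $|\hat{\zeta}|=|\zeta|$); then the competitor $s\ast(1-q\overline{\hat{\zeta}})\ast(-\hat{\zeta})$ has the same constant term as $p_n$ and norm $|\zeta|\,\|f\ast p_n\|_{H^2}$, which settles $|\zeta|<1$. Your second acknowledged gap, the case $|\zeta|=1$, is also genuine and your sketched remedy does not go through: $p_n^s$ restricted to a slice is not known to be an extremal polynomial for any classical approximation problem, and Theorem \ref{thm1out} concerns cyclicity of bounded functions, not zeros of optimal approximants, so there is nothing to ``transport back.''

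The paper sidesteps both difficulties. It reduces to $n=1$ by writing $p_n=\hat{p}_n\ast(q-\hat{\lambda})$ (right factorization, $|\hat{\lambda}|=|\lambda|$) and noting that optimality forces $q-\hat{\lambda}$ to be the degree-one optimal approximant for $f\ast\hat{p}_n$. For $n=1$ the orthogonality relation $\langle f\ast p_1-1,\,f\ast q\rangle=0$ yields the exact formula $|\lambda|=\|f\ast q\|_{H^2}^2/|\langle f,f\ast q\rangle|$, and the \emph{strict} Cauchy--Schwarz inequality (since $f\ast q$ is never a right scalar multiple of $f$ unless $f\equiv0$) gives $|\lambda|>1$ outright, with no separate treatment of the unit sphere. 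Even after repairing your argument via the right factorization, the variational comparison only excludes $|\zeta|<1$; the paper's computation is both shorter and strict.
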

\begin{proof}
	First, let us show that we can reduce the problem to optimal approximants of degree $1$. Let $\lambda$ be a zero of an optimal approximant $p_n$ for the function $f \in H^2(\B)$. Then, there exists $\hat \lambda$ on the same two dimensional sphere of $\lambda$ such that $p^c_n(q)=(q-\overline{\hat\lambda})*\hat p_n^c$, so that $p_n(q)=\hat{p}_n*(q-\hat{\lambda})$, where $\hat{p}_n$ is a polynomial of degree $n-1$ and $|\hat\lambda|=|\lambda|$.  Then, the optimality of $p_n$ guarantees that $(q- \hat{\lambda})$ is the optimal approximant of degree 1 for the function $f*\hat{p}
	_n \in H^2(\B)$ which implies that $\hat{\lambda}$ is also a zero of a degree $1$ optimal approximant. 
	%In fact if $p_n(\lambda) result of $f *r$ where $r$ is a polynomial with the same zeros as $p_n$ except $\lambda$. 
	Therefore, in order to understand the possible positions of any such zero, it is enough to understand the same question for $n=1$. 
	Now, suppose that $p_1(q)=(q-\lambda)c$ is the optimal approximant of degree $1$ of $f^{-*}$. Then, by definition of orthogonal projection, $f*p_1 -1$ must be orthogonal to $f*q$, which translates easily in the equation \[0=\big< f*p_1, f*q \big> =\langle f*qc-f\lambda c, f*q \rangle,\] which implies that 
	\[
	\langle f*q, f*q \rangle c=\langle f, f*q \rangle \lambda c,
	\]
	that is,
	\begin{equation}\label{eqn301} |\lambda| = \frac{\|f*q\|^2}{|\left< f , f*q \right>|}. \end{equation}
	Notice that $f*q$ is never a multiple of $f$ unless $f \equiv 0$ (which is against our hypothesis), and hence we can apply Cauchy-Schwarz inequality as a strict inequality to $\langle f, f*q\rangle$ in \eqref{eqn301} to get
	\[ |\lambda| > \frac{\|f*q\|}{\|f\|}.\]
	Since $f*q=q*f$ and the shift is an isometry, the right-hand side is equal to $1$ and the proof is concluded.
\end{proof}

Notice that all the points outside the closed unit ball are zeros of some optimal approximants. Indeed, if $p_1(q) = q-\lambda$ with $|\lambda| > 1$, then $p_1^{-\ast} \in H^2(\B)$ and $\|p_1^{-\ast} \ast p_1 -1\|=0$. Therefore, $p_1$ must be the only optimal approximant.

We can further understand the relationship between optimal approximants and orthogonal polynomials in the spirit of \cite{daniel-london}.
\begin{teo}
	Let $f\in H^2(\B)$ and let $p_n$ be the optimal approximant of degree $n\in\mathbb N$ of $f^{-*}$. Let $\{f\ast\varphi_k\}_{k=0}^n$ be an orthonormal basis of $f\ast\mathcal P_n$, where $\varphi_k \in \mathcal P_k$. Then, the following are equivalent:
	\begin{enumerate}[(i)]
		\item$f$ is cyclic;
		\item $p_n(0)$ converges to $f^{-\ast}(0)$ as $n\to\infty$; 
		\item $\sum_{k=0}^{\infty}|\varphi_k(0)|^2=|f^{-\ast}(0)|^2$.
	\end{enumerate}
\end{teo}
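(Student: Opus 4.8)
The plan is to reduce all three conditions to the single scalar identity
\[\sum_{k=0}^{\infty}|\varphi_k(0)|^2=|f(0)|^{-2},\]
and then read off the equivalences. First I would dispose of the degenerate case: if $f(0)=0$, then $(f\ast p)(0)=f(0)p(0)=0$ for every polynomial $p$, so by continuity of point evaluation at the origin (the reproducing kernel property) no element of $[f]$ takes the value $1$ at $0$, whence $f$ is not cyclic; moreover $f^s(0)=|f(0)|^2=0$, so $f^{-\ast}(0)$ is undefined and $(ii)$, $(iii)$ are vacuous. Hence from now on I assume $f(0)\neq 0$.

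Next I would record the key evaluations at the origin. Since a $\ast$-product keeps only the product of constant terms at $0$, we have $f\ast\varphi_k(0)=f(0)\varphi_k(0)$, and therefore, from the displayed formula for the reproducing kernel,
\[K_n(0,0)=\sum_{k=0}^{n}|f\ast\varphi_k(0)|^2=|f(0)|^2\sum_{k=0}^{n}|\varphi_k(0)|^2.\]
By \eqref{poly-kernel} one has $f\ast p_n=K_n(\cdot,0)$, so the reproducing property gives $\|f\ast p_n\|_{H^2}^2=\langle K_n(\cdot,0),K_n(\cdot,0)\rangle=K_n(0,0)$, while evaluating at the origin gives $f(0)p_n(0)=f\ast p_n(0)=K_n(0,0)$. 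Left-multiplying by $f(0)^{-1}$ and using $f(0)^{-1}|f(0)|^2=\overline{f(0)}$, I obtain
\[p_n(0)=\overline{f(0)}\sum_{k=0}^{n}|\varphi_k(0)|^2.\]
Finally a direct computation yields $f^{-\ast}(0)=(f^s(0))^{-1}f^c(0)=|f(0)|^{-2}\,\overline{f(0)}=f(0)^{-1}$, so that $|f^{-\ast}(0)|^2=|f(0)|^{-2}$.

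With these identities the three equivalences become immediate. Because $f\ast p_n$ is the orthogonal projection of the constant $1$ onto $f\ast\mathcal P_n$ and $\|1\|_{H^2}=1$, the Pythagorean identity gives $\|f\ast p_n-1\|_{H^2}^2=1-\|f\ast p_n\|_{H^2}^2$; recalling that $f$ is cyclic exactly when $\|f\ast p_n-1\|_{H^2}\to 0$, I conclude that $(i)$ is equivalent to $\|f\ast p_n\|_{H^2}^2=|f(0)|^2\sum_{k=0}^{n}|\varphi_k(0)|^2\to 1$, i.e. to $\sum_{k=0}^{\infty}|\varphi_k(0)|^2=|f(0)|^{-2}=|f^{-\ast}(0)|^2$, which is precisely $(iii)$. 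For $(ii)$, the formula for $p_n(0)$ shows that $p_n(0)\to f^{-\ast}(0)=\overline{f(0)}\,|f(0)|^{-2}$ if and only if $\sum_{k=0}^{n}|\varphi_k(0)|^2\to|f(0)|^{-2}$, the same statement again. Thus $(i)\Leftrightarrow(ii)\Leftrightarrow(iii)$.

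I do not expect a genuine obstacle here: the conceptual content is just the orthogonal-projection (Pythagoras) identity together with the evaluation $K_n(0,0)=|f(0)|^2\sum_{k\le n}|\varphi_k(0)|^2$. The only real care is bookkeeping in the non-commutative setting, namely keeping the correct order when left-multiplying by $f(0)^{-1}$, using that $|f(0)|^2$ is a real scalar and hence central, and invoking the reproducing property of the quaternionic Hilbert space in the correct inner-product slot to justify $\|f\ast p_n\|_{H^2}^2=K_n(0,0)$.
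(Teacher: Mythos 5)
Your proposal is correct and follows essentially the same route as the paper: both reduce everything to the identity $\|f\ast p_n-1\|^2=1-f(0)p_n(0)=1-K_n(0,0)$ coming from the orthogonal-projection property and \eqref{poly-kernel}, and then translate $K_n(0,0)\to 1$ into conditions $(ii)$ and $(iii)$. Your version merely makes explicit a few steps the paper leaves implicit (the degenerate case $f(0)=0$, the evaluation $f^{-\ast}(0)=f(0)^{-1}$, and the factorization $K_n(0,0)=|f(0)|^2\sum_{k\le n}|\varphi_k(0)|^2$), all of which check out.
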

\vspace{-0.6cm}
\begin{proof}
	Bearing in mind that $f*p_n$ is the orthogonal projection of the constant function $1$, we can see that \[\|f*p_n -1\|^2 = \big< 1- f*p_n, 1-f*p_n\big>=  \big< 1- f*p_n, 1\big>= 1-f(0)*p_n(0).\] 
	From this equality, the equivalence $(i)-(ii)$ is easily deduced.
% 	This shows that $(i)$ (the left-hand side tends to 0) is equivalent to (ii) (the right-hand side does).
	Also, from \eqref{poly-kernel} we see that either $(i)$ or $(ii)$ is equivalent to $1-K_n(0,0) \rightarrow 0$ as $n \rightarrow \infty$. However, $K_n(0,0)$ tending to $1$ is equivalent to $(iii)$ and this concludes  the proof.
\end{proof}

\section{Some open problems}\label{sec-problems}

We consider that the topic needs more development. We propose a few questions that seem natural from where we stand, beyond the obvious elimination of the boundedness hypothesis in Theorem \ref{thm1out}.
\begin{enumerate}
	\item[(A)] If $f\in H^2(\mathbb B)$, let $f= f_i\ast f_o$ and $f^c= (f^c)_i\ast (f^c)_o$ be the inner-outer factorizations of $f$ and of its conjugate function. Then we also have
	$f=f_i\ast f_o=[(f^c)_o]^c\ast [(f^c)_i]^c$. Is there any relationship between these two factorizations? 
	Is there something that can be said about the inner-outer factorization of $f^s$?
	\item[(B)] Suppose that the symmetrization $f^s$ of a function $f\in H^2(\B)$ is inner. Is it true that $f$ (or $f^c$) is inner?
	\item[(C)] Is the sufficient condition in Proposition \ref{thm2out} necessary for a function to be outer? This can be shown for $f^s$ under the assumption that $f$ is a multiplier.
	%\item[(D)] Is the outer part of the symmetrization, the symmetrization of the outer part? Analogously, is the symmetrization of the inner part $(f_i)^s$, equal to the inner part of the symmetrization $(f^s)_i$? For this second question, it would be enough to show that $(f_i)^s \in [(f^s)_i]$: if two inner functions generate the same invariant subspace they must coincide (up to multiplication by a unimodular constant) and we can see that $(f^s)_i \in [(f_i)^s]$.
	\item[(D)] The boundary values of slice components of a quaternionic inner function form what is usually called a \emph{Pythagorean pair}, a special situation in which two functions have modulus 1 everywhere when seen as one function in $\T^2$. Such pairs arise in connections with the so-called de Branges-Rovnyak spaces and other areas of mathematics. Can anything else be said about this relation at all?
\end{enumerate}

\bigskip

\small{\noindent\textbf{Acknowledgements.} 
	The first author is a member of INDAM-GNAMPA and is partially supported by the 2015 PRIN grant
	\emph{Real and Complex Manifolds: Geometry, Topology and Harmonic Analysis}  
	of the Italian Ministry of Education (MIUR). 
	
	The second author is partially supported by INDAM-GNSAGA, by the 2014 SIR grant {\em Analytic Aspects in Complex and Hypercomplex Geometry} and by Finanziamento Premiale FOE 2014 {\em Splines for accUrate NumeRics: adaptIve models for Simulation Environments} of the Italian Ministry of Education (MIUR).
	
	The third author is grateful for the financial support by
	the Severo Ochoa Programme for Centers of Excellence in R\&D
	(SEV-2015-0554) at ICMAT, and by the
	Spanish Ministry of Economy and Competitiveness, through grant
	MTM2016-77710-P.
	
	\smallskip
	
	Part of this project was carried out during a visit of the first and third author at the University of Firenze, and we wish to thank the Department of Mathematics and Computer Sciences for the financial support and the warm hospitality.
}

\end{document}